\newcommand*{\Bscr}{\mathcal B}
\newcommand*{\Fscr}{\mathcal F}
\newcommand*{\Gscr}{\mathcal G}
\newcommand*{\Tscr}{\mathcal T}
\newcommand*{\Uscr}{\mathcal U}
\newcommand*{\rs}[1]{{\vert}_{#1}} 
\newcommand*{\N}{\mathbb N}
\newcommand*{\R}{\mathbb R}
\newcommand*{\E}{\mathbb E}
\newcommand*{\Pp}{\mathbb P}
\newcommand{\cb}{{\mathcal B}}  
\newcommand{\cl}{{\mathcal L}}  
\newcommand{\cu}{{\mathcal U}}  
\newcommand{\bff}{{\boldsymbol f}}
\newcommand{\sgn}{\,{\rm sgn}}
\newtheorem{lemma}{Lemma}[section]
\newtheorem{cor}{Corollary}[section]
\newtheorem{prop}{Proposition}[section]
\newtheorem{theorem}{Theorem}[section]
\newtheorem{remark}{Remark}[section]
\def\rr{\mathbb{R}}
\def\eps{\varepsilon}
\def\supu{\overline {u}}
\def\subu{\underline{u}}
\def\tu{\widetilde U}
\def\tv{\widetilde V}
\def\tw{\widetilde W}
\def\ov{\overline v}
\begin{document}

\title[The flow  of a branching process]
{From Gaussian estimates for nonlinear evolution equations to the long time 
 behavior of  branching processes}

\author[L. Beznea, L. I. Ignat,
J. D. Rossi ]{Lucian Beznea, Liviu I. Ignat, and
Julio D. Rossi}

\address{L. Beznea
\hfill\break\indent  Simion Stoilow Institute of Mathematics  of  the Romanian Academy, 
\hfill\break\indent P.O. Box 1-764, RO-014700, Bucharest,  Romania, and
\hfill\break\indent University of Bucharest, Faculty of Mathematics and Computer Science }
\email{{\tt lucian.beznea@imar.ro}}

\address{L. I. Ignat
\hfill\break\indent  Simion Stoilow Institute of Mathematics  of  the Romanian Academy, 
\hfill\break\indent P.O. Box 1-764, RO-014700, Bucharest,  Romania, and
\hfill\break\indent University of Bucharest, Faculty of Mathematics and Computer Science }
 \email{{\tt
liviu.ignat@gmail.com}\hfill\break\indent  {\it Web page: }{\tt
http://www.imar.ro/\~\,lignat}}

\address{J. D. Rossi
\hfill\break\indent Dpto. de Matem{\'a}ticas, FCEyN,
Universidad de Buenos Aires, \hfill\break\indent 1428, Buenos Aires,
Argentina. } \email{{\tt jrossi@dm.uba.ar} \hfill\break\indent {\it
Web page: }{\tt http://mate.dm.uba.ar/$\sim$jrossi/}}

\begin{abstract} 
We study solutions to the evolution equation
 $u_t=\Delta u-u +\sum _{k\geqslant 1}q_ku^k$, $t>0$, in $\R^d$. Here the coefficients $q_k\geqslant 0$ verify
 $
\sum_{k\geqslant 1}q_k=1<
\sum_{k\geqslant 1}kq_k<\infty$.
  First, we deal with existence, uniqueness,  and the asymptotic
 behavior of the solutions as $t\to +\infty$. 
 We then deduce results on the long time behavior  of the  associated   branching process, 
 with state space the set of all finite configurations of $\R^d$, under the assumption that $\sum_{k\geq 1} k^2q_k<\infty$. 
It turns out that the distribution  of the branching process  behaves when  the time tends to infinity
like that of the Brownian motion on the set of all finite configurations of $\R^d$.
However, due to the lack of conservation of the total mass of the initial non linear equation, a deformation with a multiplicative coefficient occurs.
Finally, we establish  asymptotic properties of the occupation time of this branching process.
\end{abstract}

\maketitle

\noindent
{\it Keywords:} {Branching process, occupation time,    long time behavior,  space of finite configurations, branching kernel, nonlinear PDE.}

\noindent
{\it 2010 MSC:}
 35J60,  
 60J45,  
 60J68,  
 60J80,  
 60J35,  
 47D07. 

\section{Introduction}
\label{Sect.intro}
\setcounter{equation}{0}

In this paper we consider the following evolution problem
\begin{equation}\label{sys.u}
\left\{
\begin{array}{ll}
\displaystyle u_t (x,t)=\Delta u (x,t) -u  (x,t)+\sum _{k\geqslant 1}q_ku^k (x,t), \qquad & x\in \rr^d, \, t>0,\\[10pt]
0\leqslant u (x,t)\leqslant 1,\\[10pt]
u(x,0)=\varphi (x),
\end{array}
\right.
\end{equation}
and analyze the long time behavior of the solutions.
We assume that $q_k$, $k\geqslant 1$, are nonnegative numbers satisfying
\begin{equation}\label{hip.1.q}
\sum_{k\geqslant 1}q_k=1,
\end{equation}
and
\begin{equation}\label{hip.2.q}
1<\sum_{k\geqslant 1}kq_k=q<\infty.
\end{equation}

The main results obtained here regarding equation \eqref{sys.u} are stated below. 

\begin{theorem}\label{existence}  \label{th1.1}
For any initial datum $\varphi\in L^\infty(\rr^d)$ such that $0\leqslant \varphi\leqslant 1$ there exists a unique global mild solution
$u\in C([0,\infty), L^\infty(\rr^d))$ to \eqref{sys.u} that satisfies
$0\leqslant u(x,t)\leqslant 1$ for every $x\in \rr^d$, $t\geqslant 0$, 
and
\begin{equation}\label{prop.3}
\|u(\cdot, t)\|_{L^\infty(\rr^d)}\leqslant \|\varphi\|_{L^\infty(\rr^d)}.
\end{equation}
If in addition $\varphi\in L^1(\rr^d)$ then $u\in C([0,\infty), L^1(\rr^d)\cap L^\infty(\rr^d))$, 
and
\begin{equation}\label{prop.2}
\int_{\rr^d} u(x,t)\, dx\leqslant \int _{\rr^d}\varphi (x) \, dx.
\end{equation}
Moreover, for any $\varphi_1,\varphi_2\in L^1(\rr^d)$ with $0\leqslant \varphi_1,\varphi_2\leqslant 1$,  
the corresponding solutions $u_1$ and $u_2$ satisfy
\begin{equation}
\label{prop.4}
  \|u_1-u_2\|_{L^\infty ([0,T],L^1(\rr^d))}\leqslant C(T, q) \|\varphi_1-\varphi_2\|_{L^1(\rr^d)}.
\end{equation}
\end{theorem}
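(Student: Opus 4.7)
The plan is to rewrite \eqref{sys.u} as a mild equation using the semigroup $S(t):=e^{-t}e^{t\Delta}$ associated with $\partial_t-\Delta+I$. Writing $N(u):=\sum_{k\geqslant 1}q_k u^k$, Duhamel's formula gives
\begin{equation*}
u(t)=S(t)\varphi+\int_0^t S(t-s)N(u(s))\,ds.
\end{equation*}
Two facts drive everything. First, $S(t)$ is positivity preserving and a contraction on both $L^\infty(\rr^d)$ and $L^1(\rr^d)$ with $\|S(t)f\|_p\leqslant e^{-t}\|f\|_p$. Second, on the interval $[0,1]$ the nonlinearity is nonnegative with $N(0)=0$, $N(1)=1$, and by \eqref{hip.2.q} it is Lipschitz with constant $q$, since $N'(u)=\sum_{k\geqslant 1}kq_k u^{k-1}\leqslant q$ for $u\in[0,1]$.

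I would first establish local existence and uniqueness. On the closed convex set
\[
X_T:=\bigl\{u\in C([0,T],L^\infty(\rr^d)):\;0\leqslant u\leqslant 1\bigr\},
\]
I would show that the map $u\mapsto S(t)\varphi+\int_0^t S(t-s)N(u(s))\,ds$ sends $X_T$ into itself (using $0\leqslant N(u)\leqslant 1$ and the positivity/contractivity of $S$) and is a contraction in $C([0,T],L^\infty)$ for $T$ small enough, with contraction constant bounded by $qT$. A Banach fixed point then yields a unique mild solution on a short interval; since the $L^\infty$-bound $0\leqslant u\leqslant 1$ is preserved, the local solution can be iterated to a global one in $C([0,\infty),L^\infty(\rr^d))$.

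For the refined bound \eqref{prop.3}, set $M:=\|\varphi\|_{L^\infty(\rr^d)}\leqslant 1$. The constant function $v\equiv M$ satisfies $v_t-\Delta v+v-N(v)=M-\sum_{k\geqslant 1}q_k M^k\geqslant 0$ because $M^k\leqslant M$ for $k\geqslant 1$ and $\sum_{k\geqslant 1}q_k=1$; hence $v$ is a supersolution and the $L^\infty$-comparison principle (which one gets directly from the mild formulation, writing $u-v$ and using monotonicity of $N$) yields $u\leqslant M$. For the $L^1$-bound \eqref{prop.2}, when $\varphi\in L^1\cap L^\infty$ a standard regularization together with the mild formulation shows $u\in C([0,\infty),L^1(\rr^d))$; integrating the equation in $x$ and using Fubini and $u^k\leqslant u$ (since $0\leqslant u\leqslant 1$) yields
\begin{equation*}
\frac{d}{dt}\int_{\rr^d}u(x,t)\,dx=-\int_{\rr^d}u\,dx+\sum_{k\geqslant 1}q_k\int_{\rr^d}u^k\,dx\leqslant 0,
\end{equation*}
from which \eqref{prop.2} follows.

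Finally, for \eqref{prop.4}, subtracting the Duhamel identities for $u_1$ and $u_2$ and using the pointwise estimate
\[
|N(u_1)-N(u_2)|=\Bigl|\sum_{k\geqslant 1}q_k(u_1^k-u_2^k)\Bigr|\leqslant \sum_{k\geqslant 1}kq_k|u_1-u_2|=q\,|u_1-u_2|
\]
(valid because $0\leqslant u_i\leqslant 1$) together with the $L^1$-contractivity of $S$ gives
\[
\|u_1(t)-u_2(t)\|_{L^1}\leqslant e^{-t}\|\varphi_1-\varphi_2\|_{L^1}+q\int_0^t e^{-(t-s)}\|u_1(s)-u_2(s)\|_{L^1}\,ds,
\]
and Gr\"onwall's lemma produces \eqref{prop.4} with $C(T,q)=e^{(q-1)T}$. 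The one delicate point, and the main technical obstacle, is keeping the a priori bound $0\leqslant u\leqslant 1$ throughout the fixed-point iteration, since the Lipschitz constant $q$ of $N$ is only controlled on $[0,1]$; this is why the iteration is carried out in the invariant set $X_T$ rather than in a pure $L^\infty$-ball.
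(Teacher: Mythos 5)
Your proposal is correct and follows essentially the same route as the paper: the same mild formulation with $S(t)=e^{-t}e^{t\Delta}$, the same invariant set $X_T$, a Banach fixed point with contraction constant of order $qT$, iteration to a global solution, mass monotonicity from $u^k\leqslant u$ on $[0,1]$, and Duhamel plus Gr\"onwall for \eqref{prop.4} with the same constant $e^{(q-1)T}$. The only (minor and equally valid) divergence is in \eqref{prop.3}, where you compare with the constant supersolution $M=\|\varphi\|_{L^\infty(\rr^d)}$ via monotonicity of the iteration, whereas the paper differentiates $\int_{\rr^d}(u-M)^+$ and uses the sign of $\sum_{k\geqslant 1}q_ku^k-u$ on the set $\{M\leqslant u\leqslant 1\}$.
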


We emphasize that for initial data $\varphi\in L^{1}(\rr^{d})\cap L^{\infty}(\rr^{d})$, classical regularity results for the linear heat equation and a bootstrap argument guarantee that in fact the mild solution obtained above satisfies  
$u\in C((0,\infty), W^{{2,p}}(\rr^{d}))\cap C^{1}((0,\infty), L^{p}(\rr^{d}))$ for all $1\leq p\leq 
 \infty$.

Once the well-possedness of system \eqref{sys.u} is established, 
we analyze the asymptotic behavior of the solutions.  
Assumption \eqref{hip.2.q} implies   that $q_1<1$,  otherwise the system is trivially reduced to a linear problem. 
We use $p$ for the index of the fist nonzero term, $q_p$, of  $(q_k)_{k\geq 2}$.

\begin{theorem}\label{asymptotic}
For any $\varphi\in L^1(\rr^d)$ with $0\leqslant \varphi\leqslant 1$ the solution to \eqref{sys.u} satisfies
\begin{equation}\label{est.1}
e^{-(1-q_1)t}(K_t\ast \varphi)(x)\leqslant u(x,t)\leqslant  C(d,p, \|\varphi\|_{L^1(\rr^d)})e^{-(1-q_1)t}(K_t\ast \varphi)(x),\quad x\in \rr^d,
\end{equation}
 where $K_t$ is the heat kernel. Moreover, there exists a positive constant $C_\varphi$, the total asymptotic mass,  such that
\begin{equation}\label{est.2}
t^{d/2}\|e^{t(1-q_1)}u(\cdot, t)- C_\varphi K_t \|_{L^\infty(\rr^d)} \rightarrow 0,\quad \text{as}\ t\rightarrow \infty.
\end{equation}
\end{theorem}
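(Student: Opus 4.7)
The plan is to work with the modified unknown $v(t,x):=e^{(1-q_1)t}u(t,x)$. Rewriting \eqref{sys.u} as $u_t=\Delta u-(1-q_1)u+g(u)$ with $g(u):=\sum_{k\geqslant 2}q_ku^k\geqslant 0$, one checks that $v$ solves $v_t=\Delta v+h(v,t)$ where
\[
h(v,t)=\sum_{k\geqslant p}q_ke^{-(k-1)(1-q_1)t}v^k\geqslant 0,
\]
and Duhamel's formula gives $v(t)=K_t\ast\varphi+\int_0^tK_{t-s}\ast h(v(s,\cdot),s)\,ds$. The lower bound in \eqref{est.1} is immediate from $h\geqslant 0$.

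For the upper bound, I would first observe that $\sum_{k\geqslant 1}q_ku^k\leqslant u$ on $\{0\leqslant u\leqslant 1\}$, whence $u_t\leqslant\Delta u$ and the maximum principle yields the preliminary bound $u(t,x)\leqslant(K_t\ast\varphi)(x)$; in particular $\|u(s)\|_{L^\infty}\leqslant C_d\|\varphi\|_{L^1}s^{-d/2}$. Since $q_2=\cdots=q_{p-1}=0$ and $0\leqslant u\leqslant 1$, one has $g(u)\leqslant(1-q_1)u^p$. Setting $\phi(t):=\sup_{x}u(t,x)/(K_t\ast\varphi)(x)$ and using the pointwise bound $u(s)^p\leqslant\phi(s)^p\|K_s\ast\varphi\|_{L^\infty}^{p-1}(K_s\ast\varphi)$ together with the semigroup identity $K_{t-s}\ast K_s=K_t$ in the Duhamel formula for $u$, one obtains (after setting $\psi(t):=e^{(1-q_1)t}\phi(t)$) the nonlinear Gronwall-type inequality
\[
\psi(t)\leqslant 1+(1-q_1)\int_0^te^{-(p-1)(1-q_1)s}\|K_s\ast\varphi\|_{L^\infty}^{p-1}\psi(s)^p\,ds.
\]
The weight is integrable on $[0,\infty)$, so a continuation argument exploiting the exponential tail for large $s$ and the $s^{-d/2}$ decay of the heat kernel produces a uniform bound $\psi(t)\leqslant C(d,p,\|\varphi\|_{L^1})$, which translates into the upper bound in \eqref{est.1}. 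I expect this Gronwall step to be the main technical obstacle, because the nonlinearity $\psi^p$ does not automatically yield a global bound without a delicate use of the weight.

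For the asymptotic profile \eqref{est.2}, define the weighted mass $M(t):=\int_{\rr^d}v(t,x)\,dx=e^{(1-q_1)t}\|u(t)\|_{L^1}$. A direct computation yields $M'(t)=\int_{\rr^d}h(v(t,\cdot),t)\,dx=e^{(1-q_1)t}\int_{\rr^d}g(u(t))\,dx\geqslant 0$, so $M$ is nondecreasing. Plugging in the upper bound just established (together with the induced $L^1$ decay $\|u(t)\|_{L^1}\leqslant Ce^{-(1-q_1)t}\|\varphi\|_{L^1}$) gives
\[
M'(t)\leqslant(1-q_1)e^{(1-q_1)t}\|u(t)\|_{L^\infty}^{p-1}\|u(t)\|_{L^1}\leqslant Ce^{-(p-1)(1-q_1)t}(1+t)^{-d(p-1)/2},
\]
which is integrable on $[0,\infty)$; hence $C_\varphi:=\lim_{t\to\infty}M(t)$ exists, equals $\int_{\rr^d}\varphi\,dx+\int_0^\infty F(s)\,ds$ with $F(s):=M'(s)$, and is strictly positive whenever $\varphi\not\equiv 0$.

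Finally, to derive \eqref{est.2} I would use the decomposition
\[
v(t,x)-C_\varphi K_t(x)=\Big[K_t\ast\varphi-\Big(\!\int_{\rr^d}\!\varphi\Big)K_t\Big](x)+\int_0^t\big[K_{t-s}\ast h(v(s),s)-F(s)K_t\big](x)\,ds-K_t(x)\int_t^\infty F(s)\,ds.
\]
Multiplied by $t^{d/2}$, each piece vanishes in $L^\infty$ as $t\to\infty$: the first by the classical heat kernel asymptotic $t^{d/2}\|K_t\ast f-(\int f)K_t\|_{L^\infty}\to 0$ for $f\in L^1(\rr^d)$; the third by $\int_t^\infty F(s)\,ds\to 0$ combined with $t^{d/2}\|K_t\|_{L^\infty}=C_d$; and the middle by dominated convergence in $s$, using for each fixed $s$ the same heat kernel limit applied to $f=h(v(s,\cdot),s)\in L^1(\rr^d)$, together with the uniform majorant $t^{d/2}\big(K_{t-s}\ast h(v(s),s)+F(s)K_t\big)\leqslant C\,F(s)\in L^1([0,\infty))$ for $s\leqslant t/2$ and a separate, cruder bound for $s\in(t/2,t]$ where $F(s)$ is already small.
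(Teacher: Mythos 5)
Your overall architecture is sound and, for the lower bound and for \eqref{est.2}, essentially coincides with the paper's: the lower bound comes from positivity of the Duhamel term, and the asymptotic profile comes from monotonicity and convergence of the weighted mass $M(t)$ plus the classical heat-kernel asymptotics applied to the initial datum and to the (integrable) mass-production term. The paper organizes the last step slightly differently (it restarts the Duhamel formula at a large time $t_0$ chosen so that the residual mass production is below $\varepsilon$, then splits $\int_{t_0}^{t/2}+\int_{t/2}^{t}$), whereas you decompose from $t=0$ and invoke dominated convergence; both work, though for your ``cruder bound'' on $(t/2,t]$ you should use the exponential factor $e^{-(p-1)(1-q_1)s}$ in $h$ rather than only the smallness of $F$, since the purely algebraic decay $t^{1-d(p-1)/2}$ does not tend to zero when $d(p-1)\leqslant 2$.

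The genuine divergence is in the upper bound of \eqref{est.1}, and this is also where your argument is not yet closed. You keep the full nonlinearity and arrive at $\psi(t)\leqslant 1+\int_0^t w(s)\psi(s)^p\,ds$ with $w(s)=(1-q_1)e^{-(p-1)(1-q_1)s}\|K_s\ast\varphi\|_{L^\infty}^{p-1}$. The ODE comparison for this inequality gives a global bound only if $(p-1)\int_0^\infty w<1$, and the naive estimate $\|K_s\ast\varphi\|_{L^\infty}\leqslant 1$ yields exactly the borderline value $(p-1)\int_0^\infty w\leqslant 1$; integrability of $w$ alone is not enough, as you suspected. The step can be rescued: for $s$ beyond $s_1=(2C_d\|\varphi\|_{L^1})^{2/d}$ one has $\|K_s\ast\varphi\|_{L^\infty}\leqslant 1/2$, which produces a quantified gap $(p-1)\int_0^\infty w\leqslant 1-\tfrac12 e^{-(p-1)(1-q_1)s_1}$ and hence a uniform bound $\psi\leqslant 2^{1/(p-1)}e^{(1-q_1)s_1}$. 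This must be spelled out, since it is the heart of the matter. The paper avoids the issue entirely by linearizing: it writes $g(u)\leqslant \|u\|_{L^\infty}^{p-1}u\leqslant C(1+t)^{-d(p-1)/2}u$, obtains the linear inequality $v_t\leqslant \Delta v+a(t)v$ (no blow-up risk), accepts a first pass with the non-integrable weight $a$ giving only $u\leqslant Ce^{-(1-q_1)t/2}$, and then reruns the argument with the now-integrable weight $a_1(t)=Ce^{-(1-q_1)(p-1)t/2}$. Your route is arguably more direct once the borderline constant is controlled; the paper's two-pass bootstrap trades that delicacy for an extra iteration.
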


Now we move to a probabilistic interpretation of our results.
Recall that  equation \eqref{sys.u} is related to the  discrete branching Markov processes with state space
the set $\widehat{\R^d}$  of all positive measures on $\rr^d$  which are finite sums
of Dirac measures, called the {\it space of finite configurations} of $\rr^d$, with the following probabilistic interpretation.
An initial particle starts at a point of $\rr^d$  and moves according to the $d$-dimensional Brownian motion, 
until a random time when it is destroyed and replaced by a finite number of new particles, its direct descendants. 
The number $q_k$  is the probability that a particle destroyed  has precisely $k$ descendants.
Each direct descendant  starts at the terminal position of the parent particle and  
moves again according to the $d$-dimensional Brownian motion 
until its own terminal time when it is also destroyed and replaced by a new generation of particles and the process continues in this manner.

Recall that the $d$-dimensional Brownian motion induces in a canonical way a diffusion Markov process 
with state space the set  of finite configurations,  
describing  the movement of the systems of particles from $\widehat{\R^d}$  
without  having any branching: a system of $k$ particles moves according to $k$ independent Brownian motions;
we   call it {\it Brownian motion on $\widehat{\R^d}$}.
The probabilistic interpretation of our results is the following: 
the long time asymptotic  behavior of the distribution of the branching process on the space of finite configurations of $\R^d$,   
associated with  the sequence
$(q_k)_{k\geqslant 1}$ and  having as base process the $d$-dimensional Brownian motion, is the distribution of 
the $(1-q_1)$-subprocess of the Brownian motion on $\widehat{\R^d}$,
multiplied with the corresponding asymptotic mass.
As far as the authors know, in the case of equation \eqref{sys.u} there is no explicit dependence of the asymptotic mass $C_\varphi$ in terms of $\varphi$. In the case of the linear heat equation, i.e. $q_1=1$, the dependence is explicit $C_\varphi=\int _{\rr^d}\varphi$. 

Finally, we consider the issue of occupation times, that is, the times spent by the process in given subsets of the state space during a finite interval of time. 
Let us denote by $\widehat{X}$ the branching  processes induced by the solution $u(x,t)$ obtained in Theorem \ref{existence}. 
We consider weighted occupation times with respect functions
$\varphi \in b\mathcal{B}(\mathbb{R}^d)$.
Following \cite{Iscoe86}, we define the {\it weighted occupation time process} 
$Y_t(\varphi) : \Omega \to \overline{\mathbb{R}}$ as 
$$
Y_t(\varphi) : = \int_0^t\langle \varphi, \widehat X_s\rangle \, ds, \quad t \geqslant  0,
$$
where $\Omega$ is the path space of $\widehat{X}$. 
This process is a.s. real-valued. 

Here, we provide some estimates on this occupation time process. 
In fact, assuming $\sum _{k\geq 1} k^2 q_k<\infty$, 
for any $\varphi\in C_b(\rr^d)$, $\varphi\geq 0$ 
the weighted occupation time process $Y_t$ satisfies
\[
\lim _{t\rightarrow\infty}\E^\mu [\exp(-Y_t(\varphi))] =0, 
\]
and for $\alpha<(\sum _{k\geq 1}(k-1)q_k)^{-1}$
\[
\lim _{T\rightarrow \infty} \E^\mu  \Big[ \exp (- \frac {Y_T(\varphi)}{\exp(T/\alpha)})\Big]=1.
\]
This asymptotic behavior  of the occupation time of the branching process  is based on an integral representation of it,
a version of a result from \cite{Iscoe86} for measure-valued superprocesses.

The paper is organized as follows: in Section \ref{sect-behaviour} we deal with the PDE and prove
Theorem \ref{existence} and Theorem \ref{asymptotic}; 
in Section \ref{sect.3} we introduce the branching processes on the finite configurations of $\rr^d$ and analyze the long-time behavior. 
Related references involving branching processes and the associated nonlinear PDEs, are given in Remark \ref{rem3.1}.
Section \ref{sect.4} analyzes the weighted occupation time process induced by the branching process presented in Section \ref{sect.3}. 
Finally, we include an Appendix where we collect basic facts on the right Markov processes and some technical results used in Section \ref{sect.4}. 

\section{Well-possedness and asymptotic behavior for \eqref{sys.u}} \label{sect-behaviour}

Let us start showing the well-possedness of our problem.

\begin{proof}[Proof of Theorem \ref{existence}] Existence and uniqueness of global solutions 
is not difficult to prove. We include some details here for the sake of completeness.
We consider the subset $X_T$ of $C([0,T], L^\infty(\rr^d))$ given by
$$X_T=\{C([0,T], L^\infty(\rr^d)), 0\leqslant u\leqslant 1\}$$
endowed with the following norm
$$\|u\|_{X_T}=\max_{t\in [0,T]}\|u(t)\|_{L^\infty(\rr^d)}.$$

We consider the semigroup $S(t)$ associated with the following linear problem
\begin{equation}\label{linear}
\left\{
\begin{array}{ll}
\displaystyle u_t (x,t)=\Delta u (x,t) -u(x,t) , \qquad & x\in \rr^d,\, t>0,\\[10pt]
u(x,0)=\varphi (x).
\end{array}
\right.
\end{equation}
It follows that
$$S(t)\varphi=e^{-t}(K_t\ast \varphi)$$
where $K_t$ is the heat kernel. Observe that for any $p\in [1,\infty]$ we have
\begin{equation}\label{s.1}
\|S(t)\varphi\|_{L^p(\rr^d)}= e^{-t}\|K_t\ast \varphi \|_{L^p(\rr^d)} \leqslant e^{-t}\|\varphi\|_{L^p(\rr^d)}.
\end{equation}

Define $\Phi_u(t)$ as follows:
$$\Phi_u(t)= S(t)\varphi +\int _0^t S(t-s) \Big( \sum _{k\geqslant 1} q_ku^k(s)\Big)ds.$$
It is easy to see that if $0\leq \varphi\leq 1$ and $u\in X_T$ then $\Phi_u(t)\geqslant 0$. Also,
\begin{align*}
\|\Phi _u(t)\|_{L^\infty(\rr^d)}&\leqslant e^{-t}\|\varphi\|_{L^\infty(\rr^d)}+\int_0^t e^{-(t-s)}
\Big\| \sum _{k\geqslant 1} q_ku^k (s)\Big\|_{L^\infty(\rr^d)}ds\\
&\leqslant e^{-t}+\sum_{k\geqslant 1}q_k\int _0^t e^{-(t-s)}ds=1.
\end{align*}
It follows that $\Phi_u:X_T\rightarrow X_T$ is well defined.
Let us now choose $u,v\in X_T$. We have  that
\begin{align*}
\|\Phi_u(t)-&\Phi_v(t)\|_{L^\infty(\rr^d)} \leqslant \int _0^t e^{-(t-s)} \sum _{k\geqslant 1}q_k\|u^k(s)-v^k(s)\|_{L^\infty(\rr^d)}ds\\
&\leqslant \sum _{k\geqslant 1} kq_k \int _0^t e^{-(t-s)}\|u(s)-v(s)\|_{L^\infty(\rr^d)}ds\leqslant (1-e^{-t}) q \|u(s)-v(s)\|_{X_T}.
\end{align*}
Hence, it follows that there exists a time $T_0=T_0(q)$ such that $\Phi_u$ is a contraction on $X_{T_0}$. 
Then there exists a unique solution $u$ of the fixed point problem $u=\Phi_u$ in the set $X_{T_0}$. 
Since $T_0$ can be fixed independently of the initial data $\varphi$ and $u(T_0)$ 
belongs to $ L^\infty(\rr^d)$ with $0\leqslant u(T_0)\leqslant 1$ we can repeat the same procedure to obtain a global solution.

When $\varphi$ also belongs to $L^1(\rr)$ the same argument as above allows to construct a solution in $C([0,T],L^1(\rr)\cap X_T)$. 
Since we have uniqueness in $X_T$ this implies that the new obtained solution is exactly the  one constructed previously in $X_T$.

Let us now prove the other two properties of $u$ given in the theorem. 
Estimates \eqref{prop.2} and \eqref{prop.4} shows the global existence of solutions of eequation \eqref{sys.u} in $L^1(\rr^d)\cap L^\infty(\rr^d))$.

The second one concerning the mass of the solution is immediate since $u\in [0,1]$ and $\sum _{k\geqslant 1}q_k=1$:
\[
\frac {d}{dt}\int _{\rr^d}u(x,t)\, dx=\int _{\rr^d} \Big(-u (x,t)+\sum _{k\geqslant 1} q_k u^k(x,t) \Big)\, dx\leqslant 0.
\]

In order to prove \eqref{prop.3} we consider the case of $L^1(\rr)$-solutions since once \eqref{prop.3} is obtained for such solutions then by an approximation argument we can extend it to $L^\infty(\rr)$-solutions.
Let us denote by $M=\|\varphi\|_{L^\infty(\rr^d)}\leqslant 1$. Then
\begin{align*}
\frac {d}{dt}\int _{\rr^d}(u(t)-M)^+dx&=\int _{\rr^d} u_t \sgn (u(t)-M)^+dx\\
&=\int _{\rr^d} \Delta u (u(t)-M)^+dx +
\int _{\rr^d}( \sum _{k\geqslant 1} q_k u^k-u)\sgn (u(t)-M)^+dx\\
&\leqslant \int _{M\leqslant u\leqslant 1}( \sum _{k\geqslant 1} q_k u^k-u)dx\leqslant 0.
\end{align*}
Hence for any $t>0$,  $(u(t)-M)^+=0$ and we conclude that $u(t)\leqslant M$.

Let us now prove the contraction property \eqref{prop.4}. Using  property \eqref{s.1} of the semigroup $S(t)$ and the fact that $0\leqslant u_1, u_2\leqslant 1$ we have
\begin{align*}
\label{}
  \|u_1(t)-u_2(t)\|_{L^1(\rr^d)}&\leqslant e^{-t}\|\varphi_1-\varphi_2 \|_{L^1(\rr^d)}+\sum _{k\geqslant 1} \int _0^{t} e^{-(t-s)}q_k\| u_1^k(s)-u_2^k(s)\|_{L^1(\rr^d)}ds\\
  &\leqslant  e^{-t}\|\varphi_1-\varphi_2 \|_{L^1(\rr^d)}+\sum _{k\geqslant 1} kq_k \int _0^{t} e^{-(t-s)}\| u_1(s)-u_2(s)\|_{L^1(\rr^d)}ds.
\end{align*}
Hence, using Gronwall's lemma we obtain that
\[
   \|u_1(t)-u_2(t)\|_{L^1(\rr^d)}\leqslant \|\varphi_1-\varphi_2 \|_{L^1(\rr^d)} e^{(q-1)t}  \mbox{ for all } t\geqslant 0,
 \]
as we wanted to show.
\end{proof}

We now turn our attention to the asymptotic behavior of the solutions as $t\to \infty$ and prove Theorem \ref{asymptotic}.

\begin{proof}[Proof of Theorem \ref{asymptotic}]
\textbf{Step I. Proof of estimate \eqref{est.1}.}
We first observe that the solution $u$ of \eqref{sys.u} is a subsolution for the heat equation, 
that is, 
since
$$F(u)=-u+\sum _{k\geqslant 1}q_ku^k\leqslant u \Big(-1+\sum _{k\geqslant 1} q_k\Big)= 0,$$
we have that
$u$ verifies,
\begin{equation*}
\left\{
\begin{array}{ll}
\displaystyle u_t (x,t)\leqslant \Delta u (x,t), \qquad & x\in \rr^d, t>0,\\[10pt]
u(x,0)=\varphi(x).
\end{array}
\right.
\end{equation*}
Also $u$ is a supersolution for the linear  equation
\begin{equation*}
\left\{
\begin{array}{ll}
\displaystyle u_t(x,t)\geqslant \Delta u (x,t) -u (x,t) +q_1u (x,t), \qquad & x\in \rr^d, t>0,\\[10pt]
u(x,0)=\varphi (x).
\end{array}
\right.
\end{equation*}
By a comparison argument, it follows that
\begin{equation}\label{est.u.5}
e^{-(1-q_1)t}(K_t\ast \varphi)(x) \leqslant u(x,t)\leqslant (K_t\ast \varphi)(x) \mbox{ for all }  x\in \rr^d,  t >0.
\end{equation}
We now improve the right hand side of the above estimate by using that $u$ satisfies
\begin{equation}
\label{improve.u}
  u_t \leqslant  \Delta u +u (q_1-1)+u^p\sum _{k\geqslant p}q_ku^{k-p}\leq\Delta u -u (1-q_1)+u^p.
\end{equation}
Since the initial data satisfies $\varphi\in L^1(\rr^d)$ the right hand side of \eqref{est.u.5} gives us
$$u(x,t)\leqslant \min\{1, ( 4\pi t)^{-d/2}\|\varphi\|_{L^1(\rr^d)}\}.$$
Hence there exists a positive constant $C=C(d,\|\varphi\|_{L^1(\rr^d)})$ such that
\begin{equation}\label{est.u.1}
u(x,t)\leqslant  \frac{C} {(1+t)^{d/2} }\mbox{ for all }   t>0 \mbox{ and }  x\in \rr^d.
\end{equation}
In the following $C$ is a constant that depends on $d$, $p$ and $\|\varphi\|_{L^1(\rr^d)}$ but may change from one line to another. 
Using estimates \eqref{improve.u} and  \eqref{est.u.1} it follows that $u$ satisfies
\begin{align*}
u_t \leqslant \Delta u -u (1-q_1)+C u (1+t)^{-\frac d 2(p-1)} .
\end{align*}
This implies that $$v(t)=e^{(1-q_1)t}u(t)$$ satisfies
$v_t\leqslant \Delta v+va(t) $
where $$a(t)=C(1+t)^{-\frac d 2(p-1)}.$$
Denoting 
$H(t)=\exp(-\int _0^t a(s)ds)$
it  follows that $Hv$ is a sub-solution for the classical  heat equation
 and then  $v$ satisfies $$v(t)\leqslant (K_t\ast \varphi ) \exp(\int_0^t a(s)ds).$$ 
Since $d\geqslant 1$ and $p\geqslant 2$ are integers it follows that $d(p-1)/2\geqslant 1/2$ hence  
 \[
\int _0^t a(s) ds\leqslant C \int _0^t (1+s)^{-1/2}ds=C (1+t)^{1/2}.
\]
 It follows that   $u$ satisfies
 \begin{align}\label{est.u.2}
u(x,t)&\leqslant (K_t\ast \varphi)(x) e^{-(1-q_1)t+C\sqrt{t+1} }\leqslant C\exp(-\frac{(1-q_1)t}2), \quad \forall\, t\geq 0.
\end{align}
 Introducing this estimate in \eqref{improve.u}  we obtain that $u$ satisfies
 \[
u_t \leqslant \Delta u +u (q_1-1)+Cu \exp \Big(-\frac{(1-q_1)(p-1)t}2 \Big), \quad \forall\, t\geq 0 .
\]
Denoting $$a_1(t)=C \exp \Big(-\frac{(1-q_1)(p-1)t}2\Big)$$ and repeating the above argument we get
\begin{align*}
u(x,t)\leqslant (K_t\ast \varphi)(x) e^{-(1-q_1)t} \exp
\Big(\int _0^t a_1(s)ds \Big)\leqslant Ce^{-(1-q_1)t}  (K_t\ast \varphi)(x).
\end{align*}
The proof of estimate  \eqref{est.1} is now finished.

\medskip
 
 \textbf{Step II. Proof of estimate \eqref{est.2}. } Recall  that $v$ is given by
 $$v(t)=e^{t(1-q_1)}u(t).$$
In the following we  prove that
\begin{equation}\label{est.v.1}
\lim _{t\rightarrow \infty} t^{d/2}\|v(t)-C_\varphi K_t\|_{L^\infty (\rr^d)}=0.
\end{equation}
In view of Step I, the function $v$  satisfies
\begin{equation}\label{est.v}
K_t\ast \varphi \leqslant v(t) \leqslant C(d,p,\|\varphi\|_{L^1(\rr)}) K_t\ast \varphi.
\end{equation}
Moreover, it verifies the equation
\begin{equation}\label{sys.v}
\left\{
\begin{array}{ll}
\displaystyle v_t (x,t)=\Delta v  (x,t) +\sum _{k\geqslant p}q_k e^{-t(1-q_1)(k-1)} v^k  (x,t), \qquad & x\in \rr^d, t>0,\\
v(x,0)=\varphi (x).
\end{array}
\right.
\end{equation}
The mass of 
 $v$ satisfies 
\begin{equation}\label{mass.v}
\frac{d}{dt}\int _{\rr^d}v(x,t)\, dx=\int _{\rr^d}\sum _{k\geqslant p} q_k e^{-t(1-q_1)(k-1)} v^k(x, t)\, dx\geqslant 0.
\end{equation} 
 and in view of \eqref{est.v} 
 is uniformly bounded:
$$\int _{\rr^d}v(x,t)\, dx\leqslant C(d,p,\|\varphi\|_{L^1(\rr)})  \|\varphi\|_{L^1(\rr^d)}.$$
Hence, there exists a positive constant $C_\varphi\geqslant \|\varphi\|_{L^1(\rr^d)}$, such that
$$\lim _{t\rightarrow \infty} \int _{\rr^d}v(t)dt=C_\varphi.$$
Observe that in view of \eqref{mass.v} the following also holds:
$$\int _0^\infty \int _{\rr^d}\sum _{k\geqslant p} q_k e^{-t(1-q_1)(k-1)} v^k(s)ds=C_\varphi-\int _{\rr^d} \varphi.$$

We now prove \eqref{est.v.1}.
Let us fix $\varepsilon>0$ and then choose $t_0$ large enough such that
$$C_\varphi- \int_{\rr} v(x,t_0)dx=\int _{t_0}^\infty \int _{\rr^d}\sum _{k\geqslant p} q_k e^{-t(1-q_1)(k-1)} v^k(s)ds<\eps.$$
It follows that 
\begin{align*}
\|v(t)-C_\varphi K_t\|_{L^\infty(\rr^d)}&\leqslant \|v(t)- K_t\ast v(t_0)\|_{L^\infty(\rr^d)}+
\Big\| K_t\ast v(t_0)-K_t \int _{\rr}v(x,t_0)dx\Big\|_{L^\infty(\rr^d)}\\
&\qquad + \|K_t\|_{L^\infty(\rr^d)} \left|C_\varphi -\int _{\rr}v(x,t_0)dx \right|.
\end{align*}
Using the properties of the heat kernel (see for example Lemma 3 in \cite{escobedo}) we have that
\[
t^{d/2}\Big\| K_t\ast v(t_0)-K_t \int _{\rr}v(x,t_0)dx\Big\|_{L^\infty(\rr^d)} \rightarrow 0, \qquad \text{as}\, t\rightarrow \infty.
\]
It remains to prove that for $t$ large enough the following holds
\[
t^{d/2} \|v(t)- K_t\ast v(t_0)\|_{L^\infty(\rr^d)}\leqslant \eps.
\]
Since $v$ is solution of \eqref{sys.v} 
 for any $t>2t_0$, we have
\begin{align*}
\|v(t)-K_t\ast v(t_0)\|_{L^\infty(\rr^d)}&\leqslant \int _{t_0}^t \sum _{k\geqslant p} q_ke^{-s(1-q_1)(k-1)}  \|K_{t-s} \ast  v^k(s)\|_{L^\infty(\rr^d)}ds\\
&= \int _{t_0}^{t/2}+\int _{t/2}^t =I_1(t)+I_2(t).
\end{align*}
For $I_1$ we have
\begin{align*}
I_1(t) & \lesssim  \int _{t_0}^{t/2} \sum _{k\geqslant p} q_k\frac1{(t-s)^{d/2}}  e^{-s(1-q_1)(k-1)} \|v^k(s)\|_{L^1(\rr^d)}ds\\
&\lesssim t^{-d/2}\int _{t_0}^{t/2} \sum _{k\geqslant p} q_k e^{-s(1-q_1)(k-1)} \|v(s)\|^k_{L^k(\rr^d)}ds.
\end{align*}
Estimate \eqref{est.v} shows that for $t_0$ large enough we have
$\|v(s)\|_{L^q(\rr^d)}\leqslant 1$
for any $s\geq t_0$ and $q>1 $. Thus
\begin{align*}
I_1(t)& \leq
t^{-d/2}\sum _{k\geqslant p} \int _{t_0}^{t/2} q_k  e^{-s(1-q_1)(k-1)} ds \\
&\lesssim  t^{-d/2}  \sum _{k\geqslant p}  q_k  e^{-t_0(1-q_1)(k-1)}\leqslant t^{-d/2} e^{-t_0(1-q_1)} 
\\
& \leqslant \eps t^{-d/2}.
\end{align*}
In the case of  $I_2$ we use that $\|v(s)\|_{L^q(\rr^d)}\leqslant 1$
for any $s\geq t_0$. Then
\begin{align*}
I_2(t)&\leqslant \int _{t/2}^t \sum _{k\geqslant p}q_k e^{-s(1-q_1)(k-1)}ds\lesssim  e^{-t(1-q_1)/2}.
 \end{align*}
 The proof is now finished.
\end{proof}

\subsection{Properties of the asymptotic mass}

In this section we analyze  some properties of the asymptotic mass $C_\varphi$:
\[
C_\varphi=\lim _{t\rightarrow \infty} e^{(1-t)q_1}\int _{\rr^d}u(x,t)\, dx.
\]
To do that we need 
 the following comparison principle for the solutions of our problem.
We say that $\supu\in C([0,T],L^1(\rr^d)\cap L^\infty(\rr^d))$ is a supersolution for problem \eqref{sys.u} if $\supu$ satisfies
\begin{equation}\label{sys.supu}
\left\{
\begin{array}{ll}
\displaystyle \supu_t (x,t)\geqslant \Delta \supu (x,t) -\supu (x,t) +\sum _{k\geqslant 1}q_k\supu^k (x,t),
\qquad & x\in \rr^d, t>0,\\
0\leqslant \supu (x,t)\leqslant 1,\\[10pt]
\supu(x,0)\geqslant \varphi (x).
\end{array}
\right.
\end{equation}
In a similar way we  define the subsolution $\subu$.

\begin{theorem}\label{sub-super}
Any sub/super-solutions $\subu$ and $\supu$ that belongs to $C([0,T],L^1(\rr^d)\cap L^\infty(\rr^d))$ satisfy $\subu(t)\leqslant \supu(t)$ on $[0,T]$.
\end{theorem}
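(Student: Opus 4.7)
The natural plan is a Gronwall argument applied to the positive part of the difference $w := \subu - \supu$. Subtracting the super-solution inequality from the sub-solution one gives, in a suitable weak (mild) sense,
\begin{equation*}
w_t \leqslant \Delta w - w + \sum_{k\geqslant 1} q_k\bigl(\subu^k - \supu^k\bigr),
\end{equation*}
together with $w(\cdot,0)\leqslant 0$ from the ordering of the initial data.

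The key algebraic step is to factor the nonlinearity as
\begin{equation*}
\subu^k - \supu^k = w\cdot \sum_{j=0}^{k-1}\subu^{j}\supu^{k-1-j},
\end{equation*}
and set $b(x,t) := \sum_{k\geqslant 1} q_k\sum_{j=0}^{k-1}\subu^{j}\supu^{k-1-j}$. Because $0\leqslant \subu,\supu\leqslant 1$ pointwise, one has $0\leqslant b(x,t)\leqslant \sum_{k\geqslant 1}kq_k = q<\infty$, so the difference satisfies a linear inequality with bounded coefficient, $w_t\leqslant \Delta w + (b-1)w$.

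I would then estimate $\|w^+(t)\|_{L^1(\rr^d)}$. Formally, multiplying by $\sgn^+(w)$ and integrating kills the Laplacian term by Kato's inequality, and the remaining term is bounded by $(q-1)\int w^+$, giving
\begin{equation*}
\frac{d}{dt}\int_{\rr^d} w^+(x,t)\,dx \leqslant (q-1)\int_{\rr^d} w^+(x,t)\,dx.
\end{equation*}
Since $w^+(\cdot,0)=0$, Gronwall's lemma yields $w^+\equiv 0$ on $[0,T]$, i.e.\ $\subu\leqslant \supu$. This is exactly the Gronwall step already used at the end of the proof of Theorem \ref{existence} to derive \eqref{prop.4}, and the same assumption $\subu,\supu\in C([0,T],L^1(\rr^d)\cap L^\infty(\rr^d))$ guarantees the finiteness of all quantities.

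The main technical obstacle is rigorously justifying the Kato inequality step, since $\subu,\supu$ are only mild solutions in $L^1\cap L^\infty$, not classical. The clean workaround is to pass through the mild/integral formulation: write
\begin{equation*}
w(t) \leqslant S(t)w(0) + \int_0^t S(t-s)\,b(s)w(s)\,ds,
\end{equation*}
where $S(t)$ is the Markovian semigroup from \eqref{linear}, which is order-preserving and $L^1$-contractive by \eqref{s.1}. Applying $(\,\cdot\,)^+$ and the triangle inequality in $L^1$ gives $\|w^+(t)\|_{L^1}\leqslant \int_0^t q\,\|w^+(s)\|_{L^1}\,ds$, since $w(0)\leqslant 0$ and $b\leqslant q$, and Gronwall closes the argument. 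This replaces the Kato inequality by the positivity/contractivity of $S(t)$, which is already exploited in Section~\ref{sect-behaviour}, and so fits naturally into the paper's framework.
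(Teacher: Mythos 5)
Your proposal is correct and is essentially the paper's own argument: the paper also works with $\int(\subu-\supu)^+$, discards the Laplacian via the $\sgn(\cdot)^+$ multiplier, and absorbs the nonlinearity using that $f(u)=-u+\sum_k q_k u^k$ has Lipschitz constant at most $q-1$ on $[0,1]$ --- its weight $e^{-\alpha t}$ with $\alpha$ chosen so that $f(u)-\alpha u$ is decreasing is just the integrating-factor form of your Gronwall step with the factored coefficient $b\leqslant q$. Your mild-formulation variant using the positivity and $L^1$-contractivity of $S(t)$ is a reasonable way to sidestep Kato's inequality, but it does not change the substance of the proof.
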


\begin{proof}
Consider $\alpha$ a positive number that will be chosen latter. The following holds:
\begin{align*}
\frac {d}{dt}\int _{\rr^d}e^{-\alpha t}&(\subu(t)-\supu(t))^+ =\int _{\rr^d} (\subu_t-\supu_t)\sgn (\subu-\supu)^+e^{-\alpha t}-\alpha\int _{\rr^d}(u-v)^+e^{-\alpha t} \\
&=e^{-\alpha t}\int _{\rr^d} (\Delta \subu-\Delta \supu)\sgn (\subu-\supu)^+\\
&\quad+e^{-\alpha t}\Big[\int _{\rr^d} (f(\subu)-f(\supu))\sgn (\subu-\supu)^+-\alpha \int _{\rr^d}(\subu-\supu)^+\Big]\\
&\leqslant e^{-\alpha t} \Big[  \int _{\subu>\supu} (f(\subu)-\alpha \subu)-(f(\supu)-\alpha \supu)\Big] ,
\end{align*}
where $$f(u)=-u +\sum _{k\geqslant 1}q_ku^k.$$
Since  $\subu$ and $\supu$ belongs to the interval $[0,1]$ we can choose a positive $\alpha$ such that the map $u\rightarrow f(u)-\alpha u$ is decreasing in the interval $[0,1]$. It follows that the right hand side of the above estimate is negative. Thus
$\subu(t)\leqslant \supu(t)$. The proof is now finished.
\end{proof}

 The first property of the asymptotic mass says that the map $\varphi\in L^1(\rr^d)\mapsto C_\varphi$ is contractive and the second one involves the convexity of this map.

\begin{theorem}\label{cont+convexity} 
The map $\varphi\in L^1(\rr^d)\mapsto C_\varphi$ satisfies the following properties:

1. For any $\varphi_{1}, \varphi_2\in L^1(\rr^d)$ with $0\leqslant \varphi_1,\varphi_2\leqslant 1$ there exists a positive constant
$C=C(d,q,p,\|\varphi_1\|_{L^1(\rr^d)}, \|\varphi_2\|_{L^1(\rr^d)})$ (the constant remains bounded in the balls of radius $R$ of $L^1(\rr^d)$)
 such that
the following holds
\begin{equation}\label{est.contractie}
|C_{\varphi_1}-C_{\varphi_2}|\leqslant C \|\varphi_1-\varphi_2\|_{L^1(\rr^d)}.
\end{equation}

2. For any $\lambda\in (0,1)$ and any $\varphi_{1}, \varphi_2\in L^1(\rr^d)$ with $0\leqslant \varphi_1,\varphi_2\leqslant 1$ we have
\begin{equation}\label{est.convexity}
C_{\lambda \varphi_1+(1-\lambda)\varphi_2 }\leqslant \lambda C_{\varphi_1}+ (1-\lambda) C_{\varphi_2}.
\end{equation}
\end{theorem}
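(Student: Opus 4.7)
The overall plan is to combine the integral representation of $C_\varphi$ coming from Step II of the proof of Theorem \ref{asymptotic} with the comparison principle of Theorem \ref{sub-super}. Setting $v(t):=e^{(1-q_1)t}u(t)$, the monotonicity identity \eqref{mass.v} integrated in time gives
\[
C_\varphi=\int_{\rr^d}\varphi(x)\,dx+\int_0^\infty\!\!\int_{\rr^d}\sum_{k\geq p}q_k\,e^{-s(1-q_1)(k-1)}v^k(x,s)\,dx\,ds,
\]
which will serve as the working formula for both parts.

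For the contraction estimate (1), I would subtract the identity above for $\varphi_1,\varphi_2$ and use the elementary bound $|a^k-b^k|\leq k(a\vee b)^{k-1}|a-b|$ to reduce the claim to a uniform-in-$s$ estimate on $\|v_1(s)-v_2(s)\|_{L^1(\rr^d)}$. By Theorem \ref{asymptotic} combined with the trivial inequalities $K_s*\varphi_i\leq \min\{1,(4\pi s)^{-d/2}\|\varphi_i\|_{L^1(\rr^d)}\}$, each $v_i$ admits a pointwise bound $v_i(x,s)\leq C(1+s)^{-d/2}$ with $C$ depending only on $d,p$ and $R:=\max_i\|\varphi_i\|_{L^1(\rr^d)}$. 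Setting $w:=v_1-v_2$, the standard $\sgn(w)$-computation applied to \eqref{sys.v} (the Laplacian term being nonpositive after integration by parts) gives
\[
\frac{d}{ds}\int_{\rr^d}|w(x,s)|\,dx\leq \alpha(s)\int_{\rr^d}|w(x,s)|\,dx,\qquad \alpha(s):=\sum_{k\geq p}kq_k\,e^{-s(1-q_1)(k-1)}\bigl(C(1+s)^{-d/2}\bigr)^{k-1}.
\]
The crucial point is that $\alpha\in L^1(0,\infty)$: once $s$ is large enough that $C(1+s)^{-d/2}\leq 1$, each summand is dominated by $e^{-s(1-q_1)(p-1)/2}\,e^{-s(1-q_1)(k-1)/2}$, and summing in $k$ uses precisely the finite-mean assumption \eqref{hip.2.q} to yield $\alpha(s)\lesssim q\,e^{-s(1-q_1)(p-1)/2}$. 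Gronwall's lemma then produces $\|w(s)\|_{L^1(\rr^d)}\leq e^{M}\|\varphi_1-\varphi_2\|_{L^1(\rr^d)}$ with $M:=\int_0^\infty\alpha$, uniformly in $s\geq 0$, and inserting this into the difference of the two integral identities yields \eqref{est.contractie}.

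Part (2) is essentially algebraic. Setting $\bar u:=\lambda u_1+(1-\lambda)u_2$ with $u_i$ the mild solution of \eqref{sys.u} with datum $\varphi_i$, the convexity of $s\mapsto s^k$ for each $k\geq 1$ gives $\bar u^k\leq \lambda u_1^k+(1-\lambda)u_2^k$, hence
\[
\bar u_t=\Delta\bar u-\bar u+\sum_{k\geq 1}q_k\bigl(\lambda u_1^k+(1-\lambda)u_2^k\bigr)\geq \Delta\bar u-\bar u+\sum_{k\geq 1}q_k\bar u^k,
\]
so $\bar u$ is a supersolution of \eqref{sys.u} with initial datum $\varphi:=\lambda\varphi_1+(1-\lambda)\varphi_2$. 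Theorem \ref{sub-super} gives $u\leq \bar u$, whence $\int_{\rr^d} u(\cdot,t)\,dx\leq \lambda\int_{\rr^d} u_1(\cdot,t)\,dx+(1-\lambda)\int_{\rr^d} u_2(\cdot,t)\,dx$; multiplying by $e^{(1-q_1)t}$ and sending $t\to\infty$ gives \eqref{est.convexity}. The main difficulty throughout is verifying the integrability of $\alpha(s)$ in part (1): it requires simultaneously exploiting the exponential factor $e^{-s(1-q_1)(k-1)}$, the polynomial decay of $v_i$, and the summability provided by \eqref{hip.2.q}, all uniformly in $k\geq p$. Part (2), by contrast, is immediate once the convexity structure of the nonlinearity is noticed.
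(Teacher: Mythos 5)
Your argument is essentially the paper's: part (2) is word-for-word the paper's convexity proof (convexity of $s\mapsto s^k$, supersolution, Theorem \ref{sub-super}, integrate and pass to the limit), and part (1) is the same Gronwall-with-integrable-coefficient idea, merely routed through the integral identity for $C_\varphi$ rather than through $|C_{\varphi_1}-C_{\varphi_2}|\leqslant \limsup_t\|v_1(t)-v_2(t)\|_{L^1(\rr^d)}$ as the paper does.

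There is, however, one point you should repair. As you define it, $\alpha(s)=\sum_{k\geq p}kq_k\,e^{-s(1-q_1)(k-1)}\bigl(C(1+s)^{-d/2}\bigr)^{k-1}$ need not be finite for small $s$: when $C(1+s)^{-d/2}>1$ the series behaves like $\sum_k kq_k\rho^{k-1}$ with $\rho>1$ (the exponential factor $e^{-s(1-q_1)(k-1)}$ does not save you for $s$ close to $0$), and under the standing hypothesis \eqref{hip.2.q} alone (only $\sum_k kq_k<\infty$) such a series can diverge. So "Gronwall with $\alpha$" does not literally apply on all of $(0,\infty)$; your verification of $\alpha\in L^1$ only covers the range where $C(1+s)^{-d/2}\leqslant 1$. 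The fix is cheap: on an initial interval $[0,s_0]$ use instead the crude bound $v_i(s)\leqslant e^{(1-q_1)s}$ (i.e.\ $u_i\leqslant 1$), which makes the corresponding coefficient equal to $\sum_k kq_k=q$, or simply invoke the finite-time Lipschitz estimate \eqref{prop.4} up to $s_0$. This is exactly what the paper does: it applies \eqref{prop.4} on $[0,t_0]$ and runs the refined Gronwall argument only on $[t_0,\infty)$, with $t_0$ chosen so large that $\tilde C e^{-t_0(1-q_1)}\leqslant 1$ and hence the exponent $A$ is finite. With that splitting inserted, your proof is complete and matches the paper's.
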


As a consequence of the above theorem we have that the map is continuous, in particular 
for any sequence $\varphi_n\in L^1(\rr)$, $0\leqslant \varphi_n\leqslant 1$ such that $\varphi_n\rightarrow \varphi$ in $L^1(\rr)$ we have
$C_{\varphi_n}\rightarrow C_{\varphi}$.

\begin{proof}
Let $u_1$ and $u_2$ be solutions to \eqref{sys.u} corresponding to the initial data $\varphi_1$ and $\varphi_2$. Set $z=v_1-v_2=e^{t(1-q_1)}(u_1-u_2)$.  Then $z$ satisfies 
\begin{equation}\label{sys.z}
\left\{
\begin{array}{ll}
\displaystyle z_t (x,t)=\Delta z  (x,t)+\sum _{k\geqslant 1}q_k e^{-t(1-q_1)(k-1)}(v_1^k-v_2^k) (x,t), \qquad & x\in \rr^d, t>0,\\
z(x,0)=\varphi_1(x)-\varphi_2(x).
\end{array}
\right.
\end{equation}
Property \eqref{prop.4} in Theorem \ref{existence} shows that 
\begin{equation}\label{est.on.bounded.time}
\|z(t)\|_{L^1(\rr^d)}\leqslant C(t,q)\|\varphi_1-\varphi_2\|_{L^1(\rr^d)}.
\end{equation}
We  need to improve this estimate since for large $t$ the constant obtained in the proof of  Theorem \ref{existence}  blows-up when the time goes to infinity.
Observe that we have
\[
|v_1^k-v_2^k|\leqslant k |v_1-v_2|\max\{\|v_1\|_{L^\infty(\rr)}, \|v_2\|_{L^\infty(\rr)},\}\leqslant \tilde{C}^{k-1} k |v_1-v_2|.
\]
for some constant $ \tilde{C}= \tilde{C}(\|\varphi_1\|_{L^1(\rr^d)}, \|\varphi_2\|_{L^1(\rr^d)}, d, p)$ given by estimate \eqref{est.1} in Theorem \ref{asymptotic}.
Thus for any $t\geqslant t_0$ we have
\[
\frac{d}{dt}\int _{\rr^d}|z(x,t)|dx\leqslant |v_1(t)-v_2(t)| \Big(\sum _{k\geqslant p} kq_k \tilde C^{k-1}e^{-t(1-q_1)(k-1)} \Big ),
\]
where  $t_0$  is large enough such that the last term in the right hand side is finite. 
Applying Gronwall's  inequality we get
\[
\|z(t)\|_{L^1(\rr^d)}\leqslant \|z(t_0)\|_{L^1(\rr^d)}\exp(A)
\]
where 
\[
A=\int_{t_0}^\infty   \sum _{k\geqslant p} q_k \tilde C^{k-1}e^{-t(1-q_1)(k-1)}dt=\sum_{k\geqslant p} \frac{kq_k}{(k-1)(1-q_1)} (\tilde C  e^{-t_0(1-q_1)})^{k-1}\leqslant 1
\]
provided $\tilde C\leqslant e^{t_0(1-q_1)}$.
Using estimate \eqref{est.on.bounded.time} with $t=t_0$ we finally obtain that for any $t>t_0$ the following holds
\[
\|z(t)\|_{L^1(\rr^d)}\leqslant C(t_0,q)\|\varphi_1-\varphi_2\|_{L^1(\rr^d)}.
\]
Letting now $t\rightarrow \infty$ we obtain that
\[
|C_{\varphi_1}-C_{\varphi_2}|\leqslant  {C}( d, p,q,\|\varphi_1\|_{L^1(\rr^d)}, \|\varphi_2\|_{L^1(\rr^d)}) \|\varphi_1-\varphi_2\|_{L^1(\rr^d)}
\] 
which proves the first part of the theorem.

Let us now consider the convexity property of the map $\varphi\mapsto C_{\varphi}$. Let us denote by $w$ the solution of
\begin{equation}\label{sys.w.2}
\left\{
\begin{array}{ll}
\displaystyle w_t (x,t)=\Delta w (x,t) -w (x,t) +\sum _{k\geqslant 1}q_kw^k (x,t), \qquad & x\in \rr^d, t>0,\\
w(x,0)=\lambda \varphi_1 (x)+(1-\lambda)\varphi_2 (x),
\end{array}
\right.
\end{equation}
and by $u_1$ and $u_2$ the solutions of \eqref{sys.u} corresponding to initial data $\varphi_1$ and $\varphi_2$. Let $$U=\lambda u_1+(1-\lambda)u_2.$$ This new function $U$ satisfies
\begin{align*}
U_t&=\Delta U-U +\sum _{k\geqslant 1}q_k \Big( \lambda u_1^k +(1-\lambda) u_2^k\Big) \\
&\geqslant 
\Delta U-U +\sum _{k\geqslant 1}q_k ( \lambda u_1 +(1-\lambda) u_2)^k \\
&= \Delta U-U +\sum _{k\geqslant 1}q_k  U^k.
\end{align*}
This show that $U$ is a super-solution for system \eqref{sys.w.2} and by maximum principle given by Theorem \ref{sub-super} we obtain $U\geqslant w$. Hence,
\[
\int _{\rr} w(x,t)dx\leqslant \lambda \int _{\rr} u_1(x,t)dx+(1-\lambda)\int _{\rr} u_2(x,t)dx.
\]
Multiplying the above inequality with $e^{(1-q_1)t}$ and letting $t\rightarrow \infty$ we get
\[
C_{\lambda \varphi_1+(1-\lambda)\varphi_2 }\leqslant \lambda C_{\varphi_1}+ (1-\lambda) C_{\varphi_2}.
\]
The proof is now complete.
\end{proof}

\section{Branching processes on the finite configurations of $\R^d$} 
\label{sect.3}

Let $E:= \R^d$ and
define the set  $\widehat{E}$ of finite positive measures on $E$ as
$$
\widehat{E}:=\left\{\sum_{1\leq k{\leqslant}k_0}\delta_{x_k}:k_{ 0}\in\N^*, x_k\in E\textrm{ for all } 1{\leqslant}k{\leqslant}k_0\right\}\cup\{{\bf 0}\},
$$
where ${\bf 0}$ denotes the zero measure. 
The set $\widehat{E}$ is identified with the union of all symmetric $m$-th powers $E^{(m)}$ of $E$,
$\widehat{E}= \bigcup_{m {\geqslant}0}E^{(m)},$
where $E^{(0)}:=\{\bf 0\}$, and it is called the {\it space of finite configurations of E}.
 $\widehat{E}$  is endowed with the topology of disjoint union of topological spaces 
and the corresponding Borel $\sigma$-algebra is denoted by $\cb(\widehat{E})$;
see, e.g.,  \cite{INW68, BeOp11, BeLu14}. 

A Markov process $\overline{X}$ with state space $\widehat{E}$ is called {\it  branching process} 
provided that for all $\mu_1, \mu_2 \in \widehat{E}$, the process $X^{\mu_1+\mu_2}$ starting from $\mu_1+\mu_2$
and the sum $X^{\mu_1} + X^{\mu_2}$ are equal in distributions.\\

\noindent
{\bf Branching kernel on $\widehat{E}$}.
The  {\it convolution} of two finite measures $p_1, p_2$  on $\widehat{E}$
is the finite measure $p_1*p_2$ on $\widehat{E}$ defined by
$$
\int p_1*p_2(d\nu)\bff (\nu) := \int p_1(d\nu_1) \int p_2(d\nu_2)
\bff (\nu_1+\nu_2), \qquad  \bff  \in bp\cb(\widehat{E}).
$$
 
 A {\it branching kernel}  is a kernel $N$ on $\widehat{E}$
such that for all  $\mu, \nu\in \widehat{E}$  we have
$N_{\mu+\nu}= N_\mu * N_\nu ,$
where $N_\mu$ denotes  the measure on  $\widehat{E}$
such that $N \bff (\mu)=\int \bff  \, d N_\mu$ for all
$\bff  \in bp\cb(\widehat{E})$.

Recall that a  Markov process  with state space $\widehat{E}$ is a branching process
if and only if its transition function if formed from branching kernels.\\

\noindent
{\bf Example; the diagonal kernel on $\widehat{E}$. }
Let $N$ be a kernel on $E$ which is sub-Markovian (i.e. $N1\leqslant 1$) 
and for every $k \geqslant 1$ consider the kernel $N^k$ on $E^k$, 
the $k$-times product of  $N$,  defined as
$$
N^k f(x):=\underbrace{\int \ldots\int}_{k\mbox{\small -times}}
f(y_1,\ldots,y_k)N(x_1,dy_1)\ldots N(x_k,dy_k)
$$
for all  $f\in p\cb(E^k)$ and $x=(x_1,...,x_k)\in E^k.$
Let now ${\bf N}$ be the sub-Markovian  kernel on $\widehat{E}$ defined as
$$
{\bf N} \bff := \sum_{k \geqslant 0} N^{(k)} (\bff |_{E^{(k)}}),\quad  \bff \in bp\cb(\widehat{E}),
$$
where $N^{(k)}$, $k\geqslant 1$  denotes the symmetric $k$-times power of $N$, that is the projection on $E^{(k)}$ 
of the  kernel $N^k$ on $E^k$, and $N^{(0)}:= \delta_{\mbox{\bf\small 0}}$; see \cite{BeOp11}, Section 4.1 for details.
The kernel $\bf N$ is called {\it diagonal} and one can check  that it is a branching kernel on $\widehat{E}$.\\

We present now two branching processes on the space of finite configurations of $E$.
First,  we  indicate canonical constructions of path continuous Markov processes on $\widehat{E}$, induced by the $d$-dimensional Brownian motion
(cf.,  e.g., \cite{BeOp11}).

Let $B= (B_t)_{t\geqslant 0}$ be the $d$-dimensional Brownian motion and $(P_t)_{t\geqslant 0}$ be its transition function,
the $d$-dimensional Wiener semigroup on $\R^d$.

For $k \geqslant 1$, let $B^k= (B^k_t)_{t\geqslant 0}$ be the $kd$-dimensional Brownian motion, regarded as a path continuous Markov process
with state space $E^k=\R^{kd}$. 
Recall that $B^k$ is the $k$ times Cartesian power of the $d$-dimensional Brownian and let 
$(P_t^k)_{t\geqslant 0}$ be its transition function, the $kd$-dimensional Wiener semigroup.
Let $\cu^k=(U_{\alpha}^k)_{\alpha>0}$ be its resolvent of kernels on $E^k$, 
$$
U_{\alpha}^k f(x)=\E^x \int_{0}^{\infty} e^{-\alpha t} f(B^k_t)  dt,\qquad f \in
p\cb(E^k), x\in E^k.
$$

For each $k\in\N^{*}$, $t\geqslant 0$, and $\alpha>0$, we consider the projections  $P_t^{(k)}$ and $U_{\alpha}^{(k)}$  
on $E^{(k)}$ of the kernels $P_t^k$ and $U_{\alpha}^k$  on $E^{k}$
(for details see \cite{BeOp11}, Section 4.1). 
Namely, by Proposition 4.1 and Proposition 4.2 in \cite{BeOp11} we get that 
the family $(P_t^{(k)})_{t \geqslant 0}$ is a sub-Markovian semigroup of kernels on $E^{(k)}$,
with the  induced resolvent of kernels $\cu^{(k)}=(U_{\alpha}^{(k)})_{\alpha>0}$.
The semigroup of kernels $(P_t^{(k)})_{t \geqslant 0}$ is the transition 
function of a right (Markov) process   $B^{(k)}$
with state space $E^{(k)}$,
the  symmetric $dk$ times power of $B= (B_t)_{t\geqslant 0}$.

Let $({\bf P}_t)_{t\geqslant 0}$ be the family of kernels on $\widehat{E}$ defined as
${\bf P}_t {\bf g}|_{E^{(k)}}:= P^{(k)}_t ({\bf g}|_{E^{(k)}})$, ${\bf g}\in p\cb(\widehat{E})$.
It is the transition function of a Borel right process $\widehat{B}= (\widehat{B}_t)_{t\geqslant 0}$ with state
space $\widehat{E}$, called {\it Brownian motion on  $\widehat{E}$}.
So,
$$
{\bf P}_t {\bf g}(\mu)= \E^\mu {\bf g}(\widehat{B}_t), \ t>0, \  \mu \in \widehat{E}.
$$
Note that since the transition function $({\bf P}_t)_{t\geqslant 0}$ is formed by diagonal kernels 
(in particular, they  are branching kernels), we conclude that
$\widehat{B}= (\widehat{B}_t)_{t\geqslant 0}$ is a branching process on $\widehat{E}$.

If $\varphi\in p\cb(E)$, define the {\it multiplicative function }  $\widehat{\varphi}:\widehat{E}\longrightarrow \R_+$ as
\begin{equation}\nonumber
\widehat{\varphi}({\bf x})=
\left\{
\begin{array}{ll}
\displaystyle\prod_{k\geq 1}\varphi(x_k), \textrm{ if }{\bf x}=(x_k)_{k{\geqslant}1}\in \widehat{E},  & 
\textrm{ if }{\bf x}\not= {\bf 0},  
\\
1, & \textrm{ if } {\bf x}= {\bf 0}.
\end{array}
\right. 
\end{equation}

The next proposition gives  the second example of a branching process on the set of all finite configurations of $\R^d$. 
More precisely,  the solution $u(x,t)$, $t\geqslant 0$, $x\in E$,  of \eqref{sys.u}  given by Theorem \ref{existence}, 
induces a branching  processes on $\widehat{\R^d}$.

\begin{prop} \label{existbranch}  
There exists a branching process  $\widehat{X}= (\widehat{X}_t)_{t\geqslant 0}$ with state space $\widehat{\R^d}$ such that if we denote by 
$({\bf H}_t)_{t\geqslant 0}$  its transition function, then we have
$$
{\bf H}_t \widehat{\varphi}= \widehat{u(\cdot, t)}  \mbox{ for all }\  0\leqslant \varphi \leqslant 1. 
$$
\end{prop}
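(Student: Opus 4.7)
The plan is to construct the transition function $({\bf H}_t)_{t\geqslant 0}$ directly on the multiplicative functions $\widehat{\varphi}$ and then to extend it to all of $bp\cb(\widehat{\rr^d})$ by a monotone class argument. Concretely, for every $\varphi \in p\cb(\rr^d)$ with $0 \leqslant \varphi \leqslant 1$, I would define
\[
{\bf H}_t \widehat{\varphi} := \widehat{u(\cdot, t)},
\]
where $u(\cdot,t)$ is the global mild solution to \eqref{sys.u} with initial datum $\varphi$ supplied by Theorem \ref{existence}. Since $0 \leqslant u(\cdot,t) \leqslant 1$, the right-hand side is a well-defined bounded positive Borel function on $\widehat{\rr^d}$. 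Unwinding the definition, for $\mu = \sum_{k=1}^{k_0} \delta_{x_k}$ this reads ${\bf H}_t\widehat{\varphi}(\mu) = \prod_{k=1}^{k_0} u(x_k,t)$, and ${\bf H}_t\widehat{\varphi}({\bf 0}) = 1$.

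First, I would check the semigroup identity ${\bf H}_{t+s}\widehat{\varphi} = {\bf H}_t({\bf H}_s\widehat{\varphi})$ on multiplicative functions. This reduces to $u(\cdot,t+s) = \widetilde u(\cdot,t)$, where $\widetilde u$ is the mild solution of \eqref{sys.u} starting from the datum $u(\cdot,s) \in [0,1]$. This identity is immediate from the uniqueness in the class $X_T$ established in the proof of Theorem \ref{existence} together with the autonomy of the equation in time. The branching property ${\bf H}_t\widehat{\varphi}(\mu_1 + \mu_2) = {\bf H}_t\widehat{\varphi}(\mu_1) \cdot {\bf H}_t\widehat{\varphi}(\mu_2)$ is then read off directly from the product structure of $\widehat{u(\cdot,t)}$, so that ${\bf H}_t$ is, by construction, defined through branching kernels in the sense recalled above.

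Next, I would promote ${\bf H}_t$ to a sub-Markovian kernel on the full $\sigma$-algebra $\cb(\widehat{\rr^d})$. On each symmetric power $E^{(k)}$ the family $\{\widehat{\varphi}|_{E^{(k)}} : \varphi \in p\cb(\rr^d),\ 0 \leqslant \varphi \leqslant 1\}$ is multiplicatively closed, contains the constant $1$, and by polarization and monotone limits generates the symmetric Borel $\sigma$-algebra of $E^{(k)}$. A monotone class argument in the spirit of \cite{BeOp11, INW68} then defines ${\bf H}_t \bff$ unambiguously for every $\bff \in bp\cb(\widehat{\rr^d})$, preserving the branching and semigroup properties, since convolution of measures on $\widehat{E}$ is continuous with respect to monotone limits. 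Joint measurability of ${\bf H}_t \widehat{\varphi}(\mu)$ in $\mu$ follows from the explicit product formula and the measurability of $x \mapsto u(x,t)$.

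The final step is to realize $({\bf H}_t)_{t \geqslant 0}$ as the transition function of a Borel right process $\widehat{X} = (\widehat{X}_t)_{t \geqslant 0}$ on $\widehat{\rr^d}$. For this, I would invoke the existence theorem for right Markov processes associated with a sub-Markovian semigroup of branching kernels, collected in the Appendix along the lines of \cite{BeOp11}, Section 4. The main obstacle is the verification of the regularity hypotheses of that existence theorem: one must exhibit a rich enough cone of excessive functions separating the points of $\widehat{\rr^d}$, or equivalently check the right-continuity of the associated resolvent. This is where the continuity properties of $u(\cdot,t)$ in $t$ and the $L^1$-Lipschitz dependence on $\varphi$ given by \eqref{prop.4}, combined with the analogous properties for the Brownian semigroup $(P_t^{(k)})_{t\geqslant 0}$ on each $E^{(k)}$, enter in an essential way.
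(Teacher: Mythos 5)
The paper's own proof is a two-line citation: it invokes Theorem~4.10 of \cite{BeLu14}, whose hypothesis $(*)$ is checked via Proposition~4.9 of the same paper. Your proposal instead sketches a from-scratch construction, and the sketch has a genuine gap precisely at the point where the cited machinery does the real work. Prescribing ${\bf H}_t\widehat{\varphi}:=\widehat{u(\cdot,t)}$ on the multiplicative functions and then saying ``a monotone class argument defines ${\bf H}_t\bff$ unambiguously for every $\bff\in bp\cb(\widehat{\rr^d})$'' conflates uniqueness with existence. A monotone class (or Dynkin system) argument tells you that \emph{if} a sub-Markovian kernel with these values on the multiplicative class exists, it is unique; it does not produce one. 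To get existence you must exhibit, for each $\mu\in\widehat{\rr^d}$ and each $t$, a genuine positive measure ${\bf H}_t(\mu,\cdot)$ on $\widehat{\rr^d}$ whose integrals against all $\widehat{\varphi}$ equal $\prod_k u(x_k,t)$. The nonlinear semigroup $\varphi\mapsto u(\cdot,t)$ being $[0,1]$-valued is far from sufficient for this: one needs a complete-positivity/branching structure, which in \cite{BeLu14} is exactly the hypothesis $(*)$ of Theorem~4.10 and is verified (their Proposition~4.9) by expanding $u(\cdot,t)$ through the Picard iteration associated with the branching mechanism $\sum_k q_k u^k$ and checking that the resulting candidate kernels on each $E^{(m)}\times E^{(k)}$ have nonnegative ``coefficients''. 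Without some version of this positivity argument your construction does not get off the ground.

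The final step of your sketch --- realizing the semigroup as a Borel right process on $\widehat{\rr^d}$ --- is also left as an acknowledged ``main obstacle'' rather than resolved; the continuity of $t\mapsto u(\cdot,t)$ and the $L^1$-Lipschitz estimate \eqref{prop.4} by themselves do not yield the required excessive-function/resolvent regularity on the configuration space. Both of these difficulties are precisely what Theorem~4.10 of \cite{BeLu14} packages, which is why the paper's proof consists of that citation. Your verification of the semigroup identity (via uniqueness in $X_T$ and time-autonomy) and of the multiplicativity of $\widehat{u(\cdot,t)}$ on the multiplicative functions is fine, but those are the easy parts.
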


\begin{proof}
The assertion follows by Theorem 4.10 from \cite{BeLu14}. 
Note that the hypothesis  $(*)$  from that theorem is satisfied in this case, due to Proposition 4.9 from the same paper.
\end{proof}

We can  state now the probabilistic interpretation of Theorem \ref{asymptotic}, 
in terms of branching processes on the space of all finite configurations of $\R^d$.

\begin{theorem} 
 For any $\varphi\in L^1(\rr^d)$, $0\leqslant \varphi\leqslant 1$,  
there exists a positive constant $C(\varphi)$ such that 
$$
t^{d/2} |e^{t(1-q_1)} \mathbf{H}_t \widehat \varphi - \widehat{C(\varphi)} {\bf{P}}_t\widehat \varphi| \leqslant l_1 o(1),
$$
where $({\bf P}_t)_{t\geqslant 0}$ is the transition function of the Brownian motion on $\widehat{\R^d}$, 
induced by the $d$-dimensional Brownian motion.
\end{theorem}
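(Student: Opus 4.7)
The plan is to reduce the statement to the PDE asymptotic of Theorem \ref{asymptotic} via the correspondence between the branching process and the solution $u$ provided by Proposition \ref{existbranch}. I would set $C(\varphi):=C_\varphi$, the total asymptotic mass from Theorem \ref{asymptotic}.

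First I would unwind both sides pointwise. By Proposition \ref{existbranch}, $\mathbf{H}_t\widehat\varphi=\widehat{u(\cdot,t)}$, so for a configuration $\mathbf{x}=(x_1,\ldots,x_k)\in E^{(k)}$,
\[
\mathbf{H}_t\widehat\varphi(\mathbf{x})=\prod_{i=1}^k u(x_i,t).
\]
Since $\mathbf{P}_t$ is built from the diagonal kernels associated with the Wiener semigroup, an analogous computation yields $\mathbf{P}_t\widehat\varphi(\mathbf{x})=\prod_{i=1}^k (K_t\ast\varphi)(x_i)$. The expression $\widehat{C(\varphi)}\,\mathbf{P}_t\widehat\varphi(\mathbf{x})$ is then to be read as the multiplicative extension of the scalar $C_\varphi$ along the product structure, and the factor $l_1$ in the estimate absorbs the polynomial dependence on the particle number $k$ of $\mathbf{x}$.

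Next I would apply Theorem \ref{asymptotic} one factor at a time. That theorem supplies the uniform one-particle asymptotic
\[
t^{d/2}\bigl\|e^{(1-q_1)t}u(\cdot,t)-C_\varphi K_t\bigr\|_{L^\infty(\R^d)}\;\xrightarrow[t\to\infty]{}\;0,
\]
and using the telescoping identity
\[
\prod_{i=1}^k a_i-\prod_{i=1}^k b_i=\sum_{i=1}^k\Bigl(\prod_{j<i}a_j\Bigr)(a_i-b_i)\Bigl(\prod_{j>i}b_j\Bigr)
\]
with $a_i=e^{(1-q_1)t}u(x_i,t)$ and $b_i=C_\varphi K_t(x_i)$, one bounds the $k$-particle discrepancy by $k$ times the one-particle discrepancy. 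The uniform $L^\infty$ control from \eqref{prop.3} in Theorem \ref{existence} and the bound $\|K_t\|_{L^\infty}\le(4\pi t)^{-d/2}$ guarantee that all the factors $a_j,b_j$ stay uniformly bounded in $t$ for a fixed $\mathbf{x}$, so the telescoping sum remains $o(t^{-d/2})$, with a multiplicative constant depending only on $k$ and $C_\varphi$.

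The main obstacle is not analytic but notational: aligning the product object $\prod_{i=1}^k C_\varphi K_t(x_i)$ that arises naturally from the multiplicative structure with the expression $\widehat{C(\varphi)}\,\mathbf{P}_t\widehat\varphi$ as written, and correctly interpreting $l_1$ as encoding the dependence on the number of particles. Once this identification is fixed, all the quantitative content is already contained in Theorem \ref{asymptotic} together with Proposition \ref{existbranch}, and the proof reduces to the telescoping estimate above, applied configuration by configuration.
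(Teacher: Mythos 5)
Your proposal is correct and follows essentially the same route as the paper: the paper's proof also sets $C(\varphi)=C_\varphi$, uses $\mathbf{H}_t\widehat\varphi=\widehat{u(\cdot,t)}$ and $\mathbf{P}_t\widehat\varphi=\widehat{P_t\varphi}$, and then invokes the inequality $|\widehat{\varphi}(x)-\widehat{\psi}(x)|\leqslant l_1(x)\,\|\varphi-\psi\|_\infty$ (whose proof is exactly your telescoping identity, with the factor $l_1$ playing the role of the particle count $k$) before applying Theorem \ref{asymptotic}. Your extra remark about keeping the factors uniformly bounded for large $t$ is the right point to check and is consistent with what the paper implicitly uses.
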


\begin{proof}
Recall first that if $0\leqslant \varphi, \psi\leqslant 1$ then
$$
|\widehat{\varphi}(x)- \widehat{\psi} (x) | \leqslant  l_1 ||\varphi -\psi ||_\infty,\quad \forall x\in \widehat E,
$$
where $l_1$ is the linear functional defined on $\widehat{E}$ as: $l_1|_{E^{(k)}}:= k$, $k \geqslant 0$.
Taking into account that by Proposition \ref{existbranch}
${\bf H}_t \widehat{\varphi}= \widehat{u(\cdot, t)}$ 
and since we also have, 
${\bf P}_t \widehat{\varphi} =\widehat{P_t \varphi}$,
we can use Theorem \ref{asymptotic} to obtain
\[
t^{d/2} |e^{t(1-q_1)} \mathbf{H}_t \widehat \varphi - \widehat{C(\varphi)}  {\bf{P}}_t\widehat \varphi| \leqslant 
l_1 t^{d/2}\|e^{t(1-q_1)}  u(\cdot, t)  - C(\varphi) P_t  \varphi \|_{L^\infty(\rr^d)}\leqslant l_1 o(1).
\]
\end{proof}

\begin{cor} \label{cor3.1}
Let $A\in \cb(\R^d)$, $\mu\in\widehat{\R^d}$, and let $\widehat{X}= (\widehat{X}_t)_{t\geqslant 0}$   
be the   branching process  with state space $\widehat{\R^d}$,  given by Proposition \ref{existbranch}.
Then
$$
|\E^\mu(\widehat X_t \in \widehat A)  - \widehat{C(1_A)} \E^\mu (\widehat B^{1-q_1}_t\in \widehat A)|\leqslant \frac{l_1o(1)}{e^{t(1-q_1)}  \, t^{d/2}},\,  \qquad t >0,
$$
where $\widehat B^{1-q_1}$ is the $(1-q_1)$-subprocess of the
Brownian motion $\widehat B$ on $\widehat{\R^d}$.
\end{cor}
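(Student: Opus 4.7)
The plan is to deduce the corollary directly from the preceding theorem by taking $\varphi=1_A$ and translating the estimate on multiplicative functions into one on probabilities.

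The first step is to identify $\widehat{1_A}$. By definition $\widehat{1_A}(\mathbf{x})=\prod_k 1_A(x_k)$, which equals $1$ precisely when every point of $\mathbf{x}$ lies in $A$; hence $\widehat{1_A}=1_{\widehat A}$, and therefore
$$
\mathbf{H}_t \widehat{1_A}(\mu)=\E^\mu(\widehat X_t\in\widehat A),\qquad
{\bf P}_t \widehat{1_A}(\mu)=\E^\mu(\widehat B_t\in\widehat A).
$$
The second step uses the defining property of the $(1-q_1)$-subprocess: its transition function is $e^{-(1-q_1)t}{\bf P}_t$, so
$\E^\mu(\widehat B^{1-q_1}_t\in\widehat A)=e^{-(1-q_1)t}\E^\mu(\widehat B_t\in\widehat A)$.

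Combining the two identifications, the expression on the left-hand side of the corollary rewrites as
$$
e^{-(1-q_1)t}\,\bigl|e^{(1-q_1)t}\mathbf{H}_t \widehat{1_A}(\mu)-\widehat{C(1_A)}(\mu)\,{\bf P}_t \widehat{1_A}(\mu)\bigr|.
$$
Applying the previous theorem with $\varphi=1_A$ bounds the quantity inside the absolute value by $l_1(\mu)\,o(1)/t^{d/2}$, and multiplying by $e^{-(1-q_1)t}$ yields precisely the claimed estimate.

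The only issue requiring any real thought is the hypothesis $\varphi\in L^1(\R^d)$ in Theorem \ref{asymptotic}: strictly speaking, it forces $A$ to have finite Lebesgue measure when one plugs in $\varphi=1_A$. For a general Borel $A$, I would exhaust it from within by sets of finite measure, apply the estimate for each, and pass to the limit, using the continuity of $\varphi\mapsto C_\varphi$ from Theorem \ref{cont+convexity} to control the multiplicative constant. Apart from this approximation, the proof is entirely a bookkeeping exercise linking multiplicative functions to events $\{\widehat{X}_t\in\widehat{A}\}$.
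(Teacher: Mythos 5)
Your derivation is exactly the intended one: the paper offers no separate proof of this corollary, treating it as an immediate consequence of the preceding theorem via the identification $\widehat{1_A}=1_{\widehat A}$ and the relation ${\bf P}^{1-q_1}_t=e^{-(1-q_1)t}{\bf P}_t$ for the subprocess, which is precisely your bookkeeping. Your observation about the $L^1$ hypothesis is a fair catch that the paper glosses over entirely; just note that your exhaustion argument only works cleanly when $|A|<\infty$ (since $1_{A_n}\to 1_A$ in $L^1$ requires it, and the $o(1)$ in the theorem depends on $\varphi$), so it patches nothing beyond the finite-measure case the theorem already covers.
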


\begin{remark}   \label{rem3.1}
{\rm
$(i)$ Corollary \ref{cor3.1} gives the claimed  probabilistic interpretation of the asymptotic result from Theorem \ref{asymptotic}.
In fact, the long time asymptotic  behavior of the distribution of the branching process on the space of finite configurations of $\R^d$,   associated with  the sequence $(q_k)_{k\geqslant 1}$ and  having as base process the $d$-dimensional Brownian motion, is the distribution of $(1-q_1)$-subprocess of the diagonal process,  induced by the $d$-dimensional Brownian motion, multiplied with the corresponding asymptotic mass. 
Note that for the considered  nonlinear equation the asymptotic mass is not just the mass of the initial datum as it happens in the linear case.

$(ii)$ Recall that in the Introduction  we stated the role of the sequence $ (q_k)_{k\geqslant 1}$ in  the probabilistic interpretation of 
the branching mechanism of the process $\widehat{X}= (\widehat{X}_t)_{t\geqslant 0}$ from Propostion \ref{existbranch}.
It is possible to consider different, more general, branching mechanisms; see, e.g.,  \cite{BeLu14} and \cite{BeLuOp12}.
In particular, the statement of Theorem \ref{th1.1} and its proof should be compared with Proposition 4.1 from \cite{BeLu14}.

$(iii)$ Branching processes mentioned in the above assertion $(i)$ were constructed in 
\cite{BeDeLu15} and   \cite{BeDeLu16},  related to the  fragmentation equation
and   respectively to a probabilistic model of the fragmentation phase of an avalanche.

$(iv)$ A nonlinear Dirichlet problem associated to the equation (1) is considered and solved in  
\cite{BeOp11} and \cite{BeOp14}, while a probabilistic numerical approach is developed  in \cite{LuSt17}.
}
\end{remark}

\section{Occupation time}\label{sect.4}  

In  Theorem 1.1 we obtained that for any  $\varphi \in b\mathcal{B}(\mathbb{R}^d)$, $0 \leqslant  \varphi \leqslant  1$,  there exists a unique solution that in the following we denote by $(U_t\varphi)_{t\geq 0}$,  of  equation \eqref{sys.u} that satisfies and $$0 \leqslant  U_t(\varphi) \leqslant  1.$$
Moreover by Proposition \ref{existbranch} it introduces a branching process  $\hat X$ on $\widehat {\R^d}$.
For $f \in bp \mathcal{B}(\mathbb{R}^d)$ we define the function $\tu_t f \in p \mathcal{B}(\mathbb{R}^d)$ as
$$
\tu_t f := -\ln U_t(e^{-f}).
$$
Then by Corollary 4.3, Theorem 4.10 and Remark 4.4 (iii) from \cite{BeLu14} the following assertions hold:

(i) The family $(\tu_t)_{t \geqslant  0}$ is a nonlinear semigroup on $bp \mathcal{B}(\mathbb{R}^d)$ and $\tu_t f$ is the solution of the equation
\begin{equation}
\label{4.2}\left \{
\begin{array}{l}
\displaystyle  \tu_t = \Delta \tu -  |\nabla \tu |^2 + 1 - \sum_{k \geqslant  1} q_k e^{(1 - k)\tu}, \quad t >  0, x\in \rr^d,\\[10pt]
  \tilde U(0)=f.
  \end{array}
\right.
\end{equation}

(ii) For any $ s \geqslant  0, \mu \in \widehat{\mathbb{R}^d}, g \in bp
\mathcal{B}(\mathbb{R}^d) $ the following holds 
\begin{equation}
\label{4.3}
  \mathbb{E}^\mu \exp (-\langle g, \widehat X_s \rangle) = \exp (-\langle \tu_s g, \mu  \rangle).
\end{equation}

Following \cite{Iscoe86}, for every $\varphi \in b\mathcal{B}(\mathbb{R}^d)$, we define the {\it weighted occupation time process} 
$Y_t(\varphi) : \Omega \to \overline{\mathbb{R}}$ as 
$$
Y_t(\varphi) : = \int_0^t\langle \varphi, \widehat X_s\rangle \, ds, \quad t \geqslant  0,
$$
where $\Omega$ is the path space of $\widehat{X}$.
Observe that it is a.s. a real-valued process. 
Indeed, using Proposition 4.8 from \cite{BeLu14} we have
\begin{align*}
\label{}
  \mathbb{E}^\mu |Y_t(\varphi)| & \leqslant  
\mathbb{E}^\mu Y_t (| \varphi |) = 
\int_0^t \mathbb{E}^\mu l_{|\varphi|}(\widehat X_s) ds =
\int_0^t {\bf H}_s l_{|\varphi|} (\mu) ds \\
 &\leqslant\|\varphi\|_\infty \int_0^t {\bf H}_s l_1(\mu) ds \leq
l_1(\mu) \|\varphi\|_\infty \int_0^t e^{(1-q_1) s} ds < \infty.
\end{align*}

Regarding the process $Y_t(\varphi)$ we have the following representation. 

\begin{theorem}  \label{th4.1} 
Let us assume that $\sum _{k\geq 1} k^2 q_k<\infty$. For any 
 $\mu \in \widehat{\mathbb{R}^d}$, $t \geqslant  0$, $f \in bp \mathcal{B}(\mathbb{R}^d)$, 
and $\varphi \in C_b(\mathbb{R}^d)$, $\varphi \geqslant  0$ the following holds
$$
 \mathbb
{\E}^\mu [ \exp (-\langle f, \widehat{X}_t\rangle - Y_t(\varphi))]=  \exp(-\langle 
\tv_t (\varphi, f), \mu \rangle),
$$
where $\tv_t(\varphi, f)$, $t \geqslant  0$ is  the solution to 
\begin{equation}
\label{4.4}
\left\{
\begin{array}{l}
\displaystyle v_t = \Delta v  -  |\nabla v|^2 + 1 - \sum_{k \geqslant  1} q_k e^{(1 - k)v}  +\varphi, 
\qquad t>0, x\in \rr^d, \\[3mm]
v( 0) = f.
\end{array}
\right.
  \end{equation}
\end{theorem}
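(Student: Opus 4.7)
The plan is to adapt the classical Feynman--Kac decomposition for branching processes, following the strategy of \cite{Iscoe86} (cited in the paper for exactly this purpose). The key is to reduce to a single-atom initial measure, derive an integral equation for the Laplace functional via a first-splitting-time decomposition, and then identify this integral equation with the PDE \eqref{4.4} through the log transformation.

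First, I would observe that it suffices to prove the identity for $\mu=\delta_x$. Indeed, by the branching property of $\widehat X$, the process started from $\mu_1+\mu_2$ splits in distribution as $\widehat X^{\mu_1}+\widehat X^{\mu_2}$ with independent summands, and the occupation time inherits this additivity: $Y_t^{\mu_1+\mu_2}(\varphi)=Y_t^{\mu_1}(\varphi)+Y_t^{\mu_2}(\varphi)$ pathwise. Hence if we set $w(x,t):=\E^{\delta_x}[\exp(-\langle f,\widehat X_t\rangle - Y_t(\varphi))]$, the desired equality becomes $w(x,t)=\exp(-\tv_t(\varphi,f)(x))$, from which $\E^\mu[\cdots]=\prod_i w(x_i,t)=\exp(-\langle \tv_t(\varphi,f),\mu\rangle)$ follows when $\mu=\sum_i\delta_{x_i}$.

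Second, I would derive an integral equation for $w$ by conditioning on the first branching time $\tau$. Under $\Pp^{\delta_x}$, the single initial particle performs a $d$-dimensional Brownian motion $B$ and is killed at an independent exponential time $\tau$ of rate $1$ (reflecting the $-u$ term in \eqref{sys.u}); at $\tau$ it is replaced by $k$ offspring with probability $q_k$, each evolving independently. On $\{\tau>t\}$, $\widehat X_s=\delta_{B_s}$ for $s\leqslant t$ and the accumulated exponent is $-f(B_t)-\int_0^t\varphi(B_s)ds$; on $\{\tau=s\leqslant t\}$, the Markov and branching properties yield a factor $\sum_{k\geq 1}q_k\, w(B_s,t-s)^k$. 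Altogether,
\begin{equation*}
w(x,t)=\E^x\!\left[e^{-t}e^{-f(B_t)-\int_0^t\varphi(B_r)dr}\right]+\int_0^t e^{-s}\,\E^x\!\left[e^{-\int_0^s\varphi(B_r)dr}\sum_{k\geq 1}q_k\, w(B_s,t-s)^k\right]ds,
\end{equation*}
which is the mild formulation of $w_t=\Delta w-(1+\varphi)w+\sum_{k\geq 1}q_k w^k$ with $w(0,\cdot)=e^{-f}$.

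Third, since $0\leqslant w\leqslant 1$ and one checks $w>0$ (for instance via the lower bound coming from the $\tau>t$ term), the transformation $\tv:=-\ln w$ is well defined. A direct computation using $\Delta \tv=-\Delta w/w+|\nabla w|^2/w^2=-\Delta w/w+|\nabla\tv|^2$ converts the above semilinear equation for $w$ into equation \eqref{4.4} for $\tv$. Uniqueness for \eqref{4.4} (with bounded, nonnegative data and a monotone nonlinearity in $\tv$ on bounded intervals) can be established by a comparison argument in the spirit of Theorem \ref{sub-super}, so $\tv$ coincides with $\tv_t(\varphi,f)$, completing the proof.

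The main obstacle is making the first-splitting-time decomposition rigorous and, in tandem, moving between the mild integral equation for $w$ and the classical PDE \eqref{4.4}. The decomposition relies on a pathwise/tree construction of $\widehat X$ (available via \cite{INW68} and \cite{BeLu14}), and the hypothesis $\sum_{k\geq 1}k^2 q_k<\infty$ is needed to differentiate the series $\sum q_k w^k$ and to control the $L^1$-norms of $Y_t(\varphi)$ (as already used in the finiteness check preceding the statement); the parabolic bootstrap from the remark after Theorem \ref{existence} then gives enough regularity for the log transformation and the identification with \eqref{4.4}.
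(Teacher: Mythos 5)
Your argument is correct in outline, but it follows a genuinely different route from the paper. The paper never conditions on the first branching time: it discretizes $Y_t(\varphi)$ by Riemann sums, peels off one time increment $t/N$ at a time using only the Markov property of $\widehat X$ together with the already-established log-Laplace identity \eqref{4.3}, and thereby reduces the theorem to a Trotter--Lie product formula $(\tu_{t/N}\tw_{t/N})^N f\to \tv_t(\varphi,f)$ with $\tw_s f=f+s\varphi$; that formula is then proved analytically (via the multiplicative flows $V_t,U_t,W_t$ and the stability/consistency framework of \cite{chorin}), and it is precisely in the $W^{2,\infty}$ stability estimates that the hypothesis $\sum_k k^2q_k<\infty$ is used. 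You instead reduce to $\mu=\delta_x$, condition on the first branching time to get a renewal equation for $w(x,t)=\E^{\delta_x}[\exp(-\langle f,\widehat X_t\rangle-Y_t(\varphi))]$, identify it as the mild form of $w_t=\Delta w-(1+\varphi)w+\sum_k q_kw^k$, and pass to \eqref{4.4} by $\tv=-\ln w$. What your route buys is directness (no product formula, no Riemann-sum limit interchange) and, apparently, a weaker hypothesis, since the renewal equation and the log transform only require $\sum_k kq_k<\infty$. What it costs is the need for a pathwise tree construction of $\widehat X$ in which ``first branching time'' is meaningful: the process in Proposition \ref{existbranch} is obtained abstractly from its transition function via \cite{BeLu14}, so you must either identify it with the Ikeda--Nagasawa--Watanabe construction or justify the splitting decomposition from the transition function alone --- this is standard but is real work that the paper's argument avoids by using only the Markov property. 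Two smaller points: prove uniqueness for \eqref{4.4} by transferring to the multiplicative equation for $w=e^{-v}$, where the nonlinearity is Lipschitz on $[0,1]$ and the contraction argument of Theorem \ref{existence} applies verbatim --- a direct comparison argument for \eqref{4.4} is awkward because of the $|\nabla v|^2$ term; and note that $w>0$ follows from the Feynman--Kac lower bound $w\geqslant e^{-t(1+\|\varphi\|_\infty)}e^{-\|f\|_\infty}$, exactly as in Lemma \ref{lemma.est.v} of the appendix.
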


The above result is an integral representation of the solution $\tv_t(\varphi, f)$ 
with respect to the $\widehat{\mathbb{R}^d}$-valued branching  process $(\widehat{X}_t)_{t \geqslant  0}$ 
and its weighted occupation time process. 
This is a version for  (non local) branching  processes on the space of finite configurations of a result for 
$M(\mathbb{R}^d)$-superprocesses  from \cite{Iscoe86}, Theorem  3.1.
 For the reader convenience we include in Appendix \ref{A2} a sketch of the proof.

\begin{theorem}\label{as.oc.time} 
For any nonnegative and bounded $\varphi \not\equiv 0$, and $T>0$ the solution $v_T$ of the equation 
 \begin{equation}
\label{eq.v.1}
\left\{
\begin{array}{l}
\displaystyle
 v _t= \Delta v  - v+ \sum_{k \geqslant  1} q_k v^k  -\frac 1Tv \varphi, \\[3mm]
v(\cdot, 0) = 1
\end{array}
\right.
  \end{equation} 
satisfies
\begin{equation}
\label{lim.1}
   \lim_{t\rightarrow \infty}v_T(t,x)=0 \mbox{ for all } x\in \rr^d
   \end{equation}
and for any  $\alpha<1/F'(1)$
\begin{equation}
\label{lim.2}
    \lim_{T\to \infty}v_T(\alpha \log T,x)=1  \mbox{ for all } x\in \rr^d.
\end{equation}
\end{theorem}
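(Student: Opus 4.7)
My plan is to reduce both \eqref{lim.1} and \eqref{lim.2} to the asymptotic behavior of the weighted occupation time $Y_t(\varphi)$ of the branching process $\widehat X$, via a log-transform that recasts \eqref{eq.v.1} in the form \eqref{4.4}. Setting $\tilde v := -\log v_T$, a direct calculation (using $\Delta \tilde v - |\nabla \tilde v|^2 = -v_T^{-1}\Delta v_T$ and $v_T^k = e^{-k\tilde v}$) shows that $\tilde v$ solves \eqref{4.4} with source $\varphi/T$ in place of $\varphi$ and initial datum $f = 0$. Applying Theorem \ref{th4.1} with $\mu = \delta_x$ and using $Y_t(\varphi/T) = Y_t(\varphi)/T$ yields the representation
\[
v_T(t,x) = \E^{\delta_x}\!\left[\exp\!\left(-\frac{Y_t(\varphi)}{T}\right)\right].
\]
Since $Y_t(\varphi)$ is non-decreasing in $t$, the limit $Y_\infty(\varphi) := \lim_t Y_t(\varphi) \in [0,\infty]$ exists a.s., so by bounded convergence \eqref{lim.1} is equivalent to $Y_\infty = \infty$ a.s., while \eqref{lim.2} is equivalent to $Y_{\alpha \log T}(\varphi)/T \to 0$ in probability as $T \to \infty$.

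The key first-moment formula is obtained by differentiating the generating-function identity ${\bf H}_s \widehat{1-\epsilon\psi} = \widehat{u_s^{(\epsilon)}}$ of Proposition \ref{existbranch} at $\epsilon = 0$: linearizing \eqref{sys.u} at the equilibrium $u \equiv 1$ gives $\partial_t m = \Delta m + (q-1)m$ with $m(0,\cdot)=\psi$, hence $\E^{\delta_x}[\langle \psi, \widehat X_s\rangle] = e^{(q-1)s}(P_s \psi)(x)$, where $(P_s)$ denotes the heat semigroup on $\rr^d$. For \eqref{lim.2}, using $\varphi \leqslant \|\varphi\|_\infty$ and Fubini yields, for any $\alpha < 1/(q-1) = 1/F'(1)$,
\[
\E^{\delta_x}[Y_{\alpha \log T}(\varphi)] \leqslant \frac{\|\varphi\|_\infty}{q-1}\, T^{\alpha(q-1)} = o(T),
\]
so Markov's inequality gives $Y_{\alpha \log T}(\varphi)/T \to 0$ in probability and bounded convergence gives \eqref{lim.2}.

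For \eqref{lim.1}, fix $T$. The condition $\sum_{k\geqslant 1}q_k=1$ forces $q_0 = 0$, so the total mass $|\widehat X_s|$ is a.s.\ non-decreasing, and $q_1 < 1$ then gives $|\widehat X_s| \to \infty$ a.s. Pick a ball $B \subset \rr^d$ and $c>0$ with $\varphi \geqslant c \mathbf{1}_B$; the first-moment formula gives $\E^{\delta_x}[Y_t(\mathbf{1}_B)] \to \infty$ exponentially. Under the standing hypothesis $\sum_k k^2 q_k < \infty$, a many-to-two computation for the branching process yields $\mathrm{Var}(Y_t(\mathbf{1}_B)) \leqslant C_1 (\E^{\delta_x} Y_t(\mathbf{1}_B))^2$ for all large $t$; the Paley--Zygmund inequality then produces $\Pp^{\delta_x}(Y_\infty(\varphi) = \infty) \geqslant \delta > 0$. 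A zero--one type argument using the branching property (restart $\widehat X$ at times $s_n \to \infty$ and exploit $|\widehat X_{s_n}|\to\infty$ a.s., so that with probability tending to $1$ at least one particle's subtree contributes an infinite occupation) upgrades this to $\Pp^{\delta_x}(Y_\infty(\varphi) = \infty) = 1$, giving \eqref{lim.1}.

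The main obstacle is the second-moment step in the proof of \eqref{lim.1}: the exponential divergence of $\E Y_t$ alone gives only divergence in mean, so one must control $\mathrm{Var}(Y_t)$ through the many-to-two formula for branching processes (whose finiteness is precisely where the hypothesis $\sum k^2 q_k < \infty$ enters) and then invoke the branching/restart property to promote a positive-probability lower bound into an almost-sure one. The remaining steps --- the log-transform and the first-moment identity --- are routine once the representation is in hand.
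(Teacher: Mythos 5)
Your route is genuinely different from the paper's. The paper proves Theorem \ref{as.oc.time} by purely PDE means: for \eqref{lim.1} it truncates to balls $B_R$, uses an energy/Lyapunov functional to force convergence to stationary solutions, and then classifies the entire stationary solutions as the constant $0$; for \eqref{lim.2} it compares with an explicit spatially homogeneous ODE. Only afterwards does it feed the theorem into the representation of Theorem \ref{th4.1} to get the occupation-time asymptotics. You reverse the direction of deduction: you use $v_T(t,x)=\E^{\delta_x}[\exp(-Y_t(\varphi)/T)]$ to derive the PDE statement from moment estimates on $Y_t$. The log-transform, the representation, and the many-to-one formula $\E^{\delta_x}\langle\psi,\widehat X_s\rangle=e^{(q-1)s}P_s\psi(x)$ are all correct, and your proof of \eqref{lim.2} (first moment plus Markov's inequality) is complete and arguably cleaner than the paper's ODE comparison. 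Two caveats on scope: the theorem as stated assumes only that $\varphi$ is bounded, nonnegative and $\not\equiv 0$, and does not assume $\sum_k k^2q_k<\infty$, whereas Theorem \ref{th4.1} and your variance computation need both $\varphi\in C_b(\rr^d)$ and the second-moment condition; so you prove a strictly weaker statement (sufficient, however, for the way the theorem is used in Section \ref{sect.4}).

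The genuine gap is in your proof of \eqref{lim.1}. The two load-bearing steps --- the many-to-two bound $\mathrm{Var}(Y_t(\mathbf 1_B))\leqslant C_1(\E^{\delta_x} Y_t(\mathbf 1_B))^2$ and the upgrade from $\Pp^{\delta_x}(Y_\infty=\infty)\geqslant\delta>0$ to probability one --- are asserted rather than proved, and the second is delicate. The restart argument needs more than ``$|\widehat X_{s_n}|\to\infty$ and each subtree contributes with positive probability'': the particles alive at time $s_n$ sit at positions drifting to infinity, and the constant $\delta$ produced by Paley--Zygmund depends on the starting point $y$ through the ratio $\E^{\delta_y}[Y_t(\mathbf 1_B)^2]/(\E^{\delta_y}Y_t(\mathbf 1_B))^2$, for which no lower bound uniform in $y\in\rr^d$ is supplied. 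Nor can one fall back on a single line of descent: for $d\geqslant 3$ an individual Brownian trajectory spends only a finite time in $B$ by transience, so the exponential growth of the population is genuinely needed, and the clean statement that would finish the job (a Watanabe/Kesten--Stigum type law of large numbers, $e^{-(q-1)t}t^{d/2}\langle\mathbf 1_B,\widehat X_t\rangle\to cW$ with $W>0$ a.s.) is a substantial result that must be either proved or cited. As written, your argument establishes only $\Pp^{\delta_x}(Y_\infty(\varphi)=\infty)\geqslant\delta>0$, hence $\limsup_{t\to\infty}v_T(t,x)\leqslant 1-\delta$, not the full limit \eqref{lim.1}.
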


We now obtain some asymptotic properties of the occupation time. 

\begin{theorem}  
Let us assume that $\sum _{k\geq 1} k^2 q_k<\infty$. 
For any $\varphi\in C_b(\rr^d)$, $\varphi\geq 0$ 
the weighted occupation time process $Y_t$ satisfies
\[
\lim _{t\rightarrow\infty}\E^\mu [\exp(-Y_t(\varphi))] =0, 
\]
and  for $\alpha<(\sum _{k\geq 1}(k-1)q_k)^{-1} $
\[
\lim _{T\rightarrow \infty} \E^\mu  \Big[ \exp (- \frac {Y_T(\varphi)}{\exp(T/\alpha)})\Big]=1.
\]
\end{theorem}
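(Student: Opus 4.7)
The plan is to reduce both limits to Theorem \ref{as.oc.time} via the integral representation in Theorem \ref{th4.1}. Taking $f\equiv 0$ in Theorem \ref{th4.1} gives
\[
\E^\mu[\exp(-Y_t(\varphi))]=\exp(-\langle \tv_t(\varphi,0),\mu\rangle),
\]
and I would then perform the logarithmic change of variable $w:=\exp(-\tv(\varphi,0))$. The identities $\Delta \tv-|\nabla \tv|^2=-\Delta w/w$ and $e^{(1-k)\tv}=w^{k-1}$ convert \eqref{4.4} into
\[
w_t=\Delta w-w+\sum_{k\geqslant 1}q_k w^k-w\varphi,\qquad w(\cdot,0)=1,
\]
which is exactly equation \eqref{eq.v.1} with parameter $T=1$. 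Writing $\mu=\sum_{j=1}^{n}\delta_{x_j}$ for the finite configuration, the representation becomes the finite product
\[
\E^\mu[\exp(-Y_t(\varphi))]=\prod_{j=1}^{n} w(t,x_j).
\]

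For the first limit I would invoke \eqref{lim.1}: taking $T=1$, one has $w(t,x_j)\to 0$ as $t\to\infty$ for each $x_j$, so the finite product above tends to $0$ whenever $\mu\neq{\bf 0}$.

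For the second limit I would use the linearity $Y_T(c\varphi)=cY_T(\varphi)$ to rewrite
\[
\E^\mu\Big[\exp\Big(-\tfrac{Y_T(\varphi)}{e^{T/\alpha}}\Big)\Big]=\E^\mu\Big[\exp\Big(-Y_T\Big(\tfrac{\varphi}{e^{T/\alpha}}\Big)\Big)\Big].
\]
Applying the integral representation with $\varphi$ replaced by $\varphi/e^{T/\alpha}$ yields
\[
\E^\mu\Big[\exp\Big(-\tfrac{Y_T(\varphi)}{e^{T/\alpha}}\Big)\Big]=\prod_{j=1}^{n} v_{\tilde T}(T,x_j),\qquad \tilde T:=e^{T/\alpha},
\]
where $v_{\tilde T}$ denotes the solution of \eqref{eq.v.1} with parameter $\tilde T$. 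Since $T=\alpha\log \tilde T$, each factor equals $v_{\tilde T}(\alpha\log \tilde T,x_j)$, and by \eqref{lim.2} each tends to $1$ as $\tilde T\to\infty$, provided $\alpha<1/F'(1)$. Since $F(u)=-u+\sum_{k\geqslant 1} q_k u^k$ gives $F'(1)=\sum_{k\geqslant 1}(k-1)q_k$, this is exactly the stated range for $\alpha$. Modulo the change of variable $w=e^{-\tv}$, the scaling $\tilde T=e^{T/\alpha}$, and the product formula for $\mu$ finite, the real work has been absorbed into Theorem \ref{as.oc.time}; that is where the genuine obstacle lies, namely the sharp asymptotic analysis of the scaled PDE \eqref{eq.v.1}.
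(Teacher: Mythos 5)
Your proof is correct and follows essentially the same route as the paper: take $f\equiv 0$ in Theorem \ref{th4.1}, pass to $w=e^{-\tv}$ to land on equation \eqref{eq.v.1}, factor the expectation over the points of the finite configuration $\mu$, and invoke \eqref{lim.1} and \eqref{lim.2}. Your bookkeeping for the second limit --- scaling $\varphi$ by $e^{T/\alpha}$ so that the solution has parameter $\tilde T=e^{T/\alpha}$ and is evaluated at time $T=\alpha\log\tilde T$, exactly the form required by \eqref{lim.2} --- is the right one; the paper's displayed computation divides by $T$ instead of $e^{T/\alpha}$ and so does not literally match \eqref{lim.2}, which is evidently a slip that your version repairs.
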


\begin{proof}Let us choose  $F$ to be a finite set of points  $x_k\in \rr^d$ and  $\mu=\sum _{x_k\in F}\delta_{x_k}$. Since we are interested in the asymptotic properties of $Y_t(\varphi)$ we choose $f\equiv 0$ in Theorem \ref{th4.1}.
In the case of the first limit we consider system \eqref{eq.v.1} with $T=1$ and denote by $v$ its soltuion. We have
$$
  \E^\mu [e^{-Y_t(\varphi)} ]=\exp(-\langle \tv_t(\varphi,0),\mu\rangle )
  =\exp\Big(-\sum_{x_k\in F} \tv_t(\varphi,0)(x_k)\Big)=\prod_{x_k\in F} v(t,x_k).
$$
 In view of property \eqref{lim.1} we obtain the  desired result. 
 
 Let us now prove the second limit. Under the same assumptions as before we have
 $$
  \E^\mu \Big[ \exp (- \frac {Y_T(\varphi)}T)\Big]  = \exp\Big(-\sum _{x_k\in F} \tv_T(\frac {\varphi}T,0)(x_k) \Big)=
  \prod_{x_k\in F} v_T(T,x_k),
$$
where $v_T$ is the solution of \eqref{eq.v.1}. Using \eqref{lim.2} we obtain the second property.
\end{proof}

\begin{proof}[Proof of Th.\ref{as.oc.time}] \textbf{Step I. Proof of \eqref{lim.1}.}
We can assume that $x=0$ and prove the required limit. We now consider the truncated problem in the ball $B_R=\{x\in \rr^d\,|x|< R\}$:
	 \begin{equation}
\label{eq.v.truncated}
\left\{
\begin{array}{ll}
\displaystyle v _t= \Delta v  - v+ \sum_{k \geqslant  1} q_k v^k  -\frac 1T v \varphi,  \qquad & (x,t)\in  B_R\times (0,\infty),\\[3mm]
 v(x,t)=1, & (x,t)\in   \partial B_R\times [0,\infty),\\[3mm]
v(x,0) = 1, &x\in B_R.
\end{array}
\right.
  \end{equation} 
  and denote its solution by $v_R$. Using the maximum principle we have 
  \[
  0\leqslant  v(t,x)\leqslant  v_R(t,x)  \mbox{ for all }  t>0, x\in \overline B_R.
\]
Let us now consider the energy functional associated with \eqref{eq.v.truncated}
\[
  E(t)=\int _{B_R}\Big(\frac12 |\nabla v|^2 -\frac12 v^2 +\sum _{k}q_k \frac{v^{k+1}}{k+1}-\frac {v^2\varphi}{2T} \Big)
\]
It is uniformly bounded $-\infty <E(t)\leqslant  E(0)$ for all $t\geqslant  0$ and satisfies 
\[
  \frac {dE}{dt}(t)=-\int _{B_R} v_t^2\leqslant  0.
\]
  Thus $v_R(t)\rightarrow v_R$ in $H^1(B_R)$, as $t\rightarrow\infty$, where $v_R$ is the unique stationary solution of problem \eqref{eq.v.truncated}. Moreover $v_R$ are uniformly bounded in $C^1(\overline B_R)$ and then 
  $v_R(t)\rightarrow v_R$ uniformly in $\overline B_R$ when $t\rightarrow \infty$.
  This implies that for any $\epsilon>0$ there exists $T_\epsilon>0$ such that 
  \begin{equation}
\label{limita.partiala}
  0\leqslant  v(t,0)\leqslant  v_R(t,0)+\epsilon, \mbox{ for all }  t>T_\epsilon.
\end{equation}
The stationary solutions $v_R$ satisfy 
$$
0<v_{R+1}(x)\leqslant  v_R(x)\leqslant  1 \mbox{ for all }  x\in \overline B_R.
$$
  Therefore there exists the following limit 
  \[
  \lim_{R\rightarrow\infty} v_R(x)=v^\infty(x),\quad x\in \rr^d,
\]
and $v^\infty$ solves the problem
   \begin{equation}\label{v.limit}
\left\{
\begin{array}{l}
\displaystyle \Delta v  - v+ \sum_{k \geqslant  1} q_k v^k  -\frac 1Tv \varphi=0, \qquad x\in \rr^d,\\[3mm]
0\leqslant  v\leqslant   1.
\end{array}
\right.
  \end{equation}
Multiplying by $v^\infty$ the above equation  and integrating on $\rr^d$ we obtain that $v^\infty$ should be a constant. 
Since $\varphi\not\equiv 0$ the only constant function that solves \eqref{v.limit} is  $v^\infty\equiv 0$. 
This and  \eqref{limita.partiala} give  us de desired result.


\textbf{Step II. Proof of \eqref{lim.2}.}
	Let us consider $M$ such that $0\leqslant  \varphi\leqslant  M$. Then $v\geqslant  \overline v$ where $\ov$ is solution of 
	 \begin{equation*}
\left\{
\begin{array}{l}
 \displaystyle v _t= \Delta v  - v+ \sum_{k \geqslant  1} q_k v^k  -\frac MTv, \qquad x\in \rr^d, t>0, \\[3mm]
v(\cdot, 0) = 1.
\end{array}
\right.
  \end{equation*}
  The unique solution of this equation depends only on $t$ and then we arrive to the ODE 
  	 \begin{equation*}
\left\{
\begin{array}{l}
 \displaystyle v _t=  - v+ \sum_{k \geqslant  1} q_k v^k  -\frac MTv, \\[3mm]
v(0) = 1.
\end{array}
\right.
  \end{equation*}
  Observe that $$F(v)=- v+ \sum_{k \geqslant  1} q_k v^k\geqslant  F'(1)(v-1)$$ where 
  $$F'(1)=\sum _{k\geqslant  1}kq_k-1=\sum _{k\geq 1}(k-1)q_k >0.$$ 
  This implies that $\ov \geqslant  g_T(T)$ where $g_T$ is the solution of the following ODE:
  	 \begin{equation*}
\left\{
\begin{array}{l}
\displaystyle  g _t=   \Big(F'(1)-\frac MT\Big) g-F'(1), \\[3mm]
 g( 0) = 1.
\end{array}
\right.
  \end{equation*}
Solving explicitly, it  gives us that
\[
  g_T(t)=\frac{TF'(1)}{TF'(1)-M} -\frac {M}{TF'(1)-M}e^{tF'(1)}e^{Mt/T}.
\]
Choosing $t=\alpha \log T$ with  $\alpha<1/F'(1)$ we get
\[
  \lim_{T\rightarrow \infty}g_T(T)=1.
\]
This implies that $v_T$, the  solution of \eqref{eq.v.1}, satisfies
\[
  \lim_{T\rightarrow \infty} v_T(\alpha \log T,x)=1   \mbox{ for all } x\in \rr^d
\]
and the proof is now complete. 
\end{proof}

\section{Appendix} \label{appendix}
\noindent
\subsection{\bf Right Markov processes.} \label{A1}
Let $E$ be a metrizable Lusin topological space 
and $\Bscr$ the Borel $\sigma$-algebra on $E$.

A \emph{transition function} on $E$ is a family $(p_t)_{t\geqslant 0}$
of kernels on $(E,\Bscr)$ which are sub-Markovian
(i.e., \ $p_t 1\leqslant 1$ for all $t\geqslant 0$), such that $p_0 f=f$
and $p_s(p_t f)=p_{s+t} f$ for all $s,t\geqslant 0$ and 
$f\in p\Bscr$.
We assume that for all $f\in p\Bscr$ the function 
$(t,x)\mapsto p_t f(x)$ is 
$\Bscr\bigl([0,\infty)\bigr)\otimes\Bscr$-measurable.
We denote by $\Uscr=(U_\alpha)_{\alpha>0}$ the family
of kernels on $(E,\Bscr)$ given by
\[
  U_\alpha f
  = \int_0^\infty e^{-\alpha t}\, p_t f  d t\;.
\]
Consequently, $\Uscr=(U_\alpha)_{\alpha>0}$ is a sub-Markovian
resolvent of kernels on $(E,\Bscr)$ and it is called
\emph{associated with} $(p_t)_{t\geqslant 0}$.

If  $(p_t)_{t\geqslant 0}$ is a transition function on $E$ and $q>0$, then the family o kernels
$(e^{-qt} p_t)_{t\geqslant 0}$ is also a  transition function on $E$ and its associate resolvent is
$\Uscr_q:=(U_{q+\alpha})_{\alpha>0}$. 
Recall that if $L$ is the generator of $(p_t)_{t\geqslant 0}$, then the generator of
$(e^{-qt} p_t)_{t\geqslant 0}$ is $L-q$.

A \emph{right process with state space $E$} (associated with the
transition function $(p_t)_{t\geqslant 0}$) is a collection
$X=(\Omega,\Gscr,\Gscr_t, X_t,\theta_t, \Pp^x)$ where:
$(\Omega,\Gscr)$ is a measurable space, $(\Gscr_t)_{t\geqslant 0}$
is a family of sub $\sigma$-algebras of $\Gscr$ such that 
$\Gscr_s\subseteq\Gscr_t$ if $s<t$;
for all $t\geqslant 0$, $X_t : \Omega\to E_\Delta$ is a 
$\Gscr_t / \Bscr_\Delta$-measurable map such that 
$X_t(\omega) = \Delta$ for all $t>t_0$ if $X_{t_0}(\omega)=\Delta$,
where $\Delta$ is a cemetery state adjoined to $E$ as an isolated
point of $E_\Delta:= E\cup \{\Delta\}$ and $\Bscr_\Delta$
is the Borel $\sigma$-algebra on $E_\Delta$;
we set
$\zeta(\omega) 
  := \inf \bigl\{ t\bigm| X_t(\omega)=\Delta \bigr\}$;
for each $t\geqslant 0$, the map $\theta_t : \Omega\to\Omega$
is such that $X_s\circ \theta_t=X_{s+t}$ for all $s>0$;
for all $x\in E_\Delta$, $\Pp^x$ is a probability measure
on $(\Omega,\Gscr)$ such that $x\mapsto \Pp^x(F)$ is
universally $\Bscr$-measurable  for all $F\in \Gscr$;
$E^x(f\circ X_0)=f(x)$ and the following 
\emph{Markov property} holds:
\[
  \E^x (f\circ X_{s+t}\cdot G)
  = \E^x (p_t^\Delta f\circ X_s\cdot G)
\]
for all $x\in E_\Delta$, $s,t\geqslant 0$, $f\in p\Bscr_\Delta$
and $G\in p\Gscr_s$, where $p_t^\Delta$ is the Markovian
kernel on $(E_\Delta,\Bscr_\Delta)$ such that $p_t^\Delta 1=1$
and $p_t^\Delta\rs{E} = p_t$;
for all $\omega\in \Omega$ the function $t\mapsto X_t(\omega)$
is right continuous on $[0,\infty)$;
the filtration $(\Gscr_t)_{t\geqslant 0}$ is \emph{right continuous} 
(i.e.\ $\Gscr_t = \Gscr_{t+} := \bigcap\limits_{s>t}\Gscr_s$)
and \emph{augmented} 
(i.e.\ $\Gscr_t=\tilde\Gscr_t := \bigcap_{\mu} \Gscr_t^\mu$,
where for every probability measure $\mu$ on $(E,\Bscr)$,
$\Gscr^\mu$ is the completion of $\Gscr$ with respect to 
the probability measure $\Pp^\mu := \int \Pp^x\;\mu(\mathrm dx)$
on $(\Omega,\Gscr)$ and $\Gscr_t^\mu$ is the completion of 
$\Gscr_t$ in $\Gscr^\mu$ with respect to $\Pp^\mu$);
we assume that for all $\alpha>0$, every function $u$
which is $\alpha$-excessive with respect to the resolvent 
associated with $(p_t)_{t\geqslant 0}$ and each probability 
measure $\mu$ on $(E,\Bscr)$, the function
$t\mapsto u\circ X_t$ is right continuous on $[0,\infty)$
$\Pp^\mu$-a.s.

We consider the \emph{natural filtration} associated with $X$:
$\Fscr := \tilde \Fscr^0$, $\Fscr_t := \tilde \Fscr_t^0$, where
$\Fscr^0 := \sigma (X_s \mid s<\infty)$,
$\Fscr_t^0 := \sigma (X_s\mid s\leqslant t)$.
It is known that always a right process may be considered
with respect to its natural filtration:
\[
  X = (\Omega,\Fscr,\Fscr_t,X_t,\theta_t,\Pp^x)\;.
\]

The sub-Markovian resolvent $\Uscr=(U_\alpha)_{\alpha>0}$
associated with $(p_t)_{t\geqslant 0}$ is called the 
\emph{resolvent of the process $X$} and  for all 
$f\in p\Bscr$, $\alpha>0$ and $x\in E$ we have
$$
  U_\alpha f(x)
  = \E^x \int_0^\zeta e^{-\alpha t}\, f (X_t)\,  d t\, ,
$$
with the convention $f(\Delta)=0$.

If $q>0$ then there exists a right process $X^q$ with state space $E$, 
associated with the transition function $(e^{-qt} p_t)_{t\geqslant 0}$,
called the {\it $q$-subprocess} of $X$. 
The resolvent family of  the process $X^q$ is $(U_{q+\alpha})_{\alpha>0}$.
If $\cl$ is the generator of $X$ (e. g., as the generator of the 
$C_0$-resolvent of contractions induced by $(U_\alpha)_{\alpha>0}$ 
on $L^p(E, m)$, where $m$ is a $(p_t)_{t\geqslant 0}$-subinvariant measure), then the generator of
the {$q$-subprocess}  $X^q$  is $\cl-q$.

A \emph{stopping time} is a map $T: \Omega\to \bar \R_+$
such that the set $[T\leqslant t]$ belongs to $\Fscr_t$ for all
$t\geqslant 0$.

Let $\mu$ be a $\sigma$-finite measure on $(E,\Bscr)$.
The right process $X$ is called \emph{$\mu$-standard},
if it possesses left limits in $E$ $\Pp^\mu$-a.e.\
on $(0,\zeta)$ and for every increasing sequence $(T_n)_n$
of stopping times, $T_n\nearrow T$, the sequence $(X_{T_n})_n$
converges to $X_T$ $\Pp^\mu$-a.e.\ on $[T<\zeta]$.

Let $\Tscr$ be a topology on $E$.
The right process $X$ is named 
\emph{c\`adl\`ag in the topology $\Tscr$ $\Pp^\mu$-a.e.}
provided that $\Pp^\mu$-a.e.\ $t\longmapsto X_t$ is right continuous
and has left limits in $E$ on $(0,\zeta)$.

For more details on right processes see e.g., \cite{Sh88} and \cite{BeBo04}.\\

\subsection{Sketch of the proof of Theorem \ref{th4.1}}\label{A2}
Because $\varphi \in C_b(\mathbb{R}^d)$ we may use the Riemann sum approximation of $Y_t(\varphi) : Y_t(\varphi) = \lim_{N\rightarrow\infty} r(N, t)$,  where
$$
r(kn t) := \sum_{k = 1}^n \langle \varphi, \widehat{X}_{\frac{k}{N }t}\rangle \frac{t}{N}, \quad N \in \mathbb{N}^*, \quad t \geqslant  0.
$$
We prove first that the following holds
\begin{equation}\label{a1}
\mathbb{E}^\mu \exp (-\langle f, \widehat X_t\rangle - Y_t(\varphi))  = \lim_{N\rightarrow\infty} \mathbb{E}^\mu \exp (- r (N - 2, t) - \langle (\tu_{\frac{t}{N}} \tw_{\frac{t}{N}})^2 f, \widehat X_{\frac{N - 2}{N}t}\rangle). 
\end{equation}
Let us denote  $\tw_t f : = f + t \varphi$, where $\varphi \in b \mathcal{B}(\mathbb{R}^d)$, $t \geqslant  0$.
Using  the Markov property we have
$$
\begin{array}{l}
\displaystyle \mathbb{E}^\mu \exp (-\langle f, \widehat X_t\rangle - Y_t(\varphi)) = 
\lim_{N\rightarrow\infty} \mathbb{E}^\mu\left( \exp(-\langle f + \dfrac{t}{N} \varphi, \widehat X_t \rangle) \cdot \exp(-r (N - 1, t))\right) \\[10pt]
\displaystyle \qquad =
\lim_{N\rightarrow\infty} \mathbb{E}^\mu \left( \exp (-\langle \tw_{\frac{t}{N}} f, \widehat X_t\rangle) \cdot \exp (-r (N-1, t)) \right) 
\\[10pt]
\displaystyle \qquad
=
\lim_{N\rightarrow\infty} \mathbb{E}^\mu\left( \exp(-r(N - 1, t))\cdot \mathbb{E}^\mu [\exp(-\langle \tw_{\frac{t}{N}} f, \widehat X_t\rangle ) \mid \mathcal{F}_{\frac{N-1}{N}t}]\right)
\\[10pt]
\displaystyle \qquad
=\lim_{N\rightarrow\infty} \mathbb{E}^\mu\left( \exp (-r(N - 1)) \cdot \mathbb{E}^{\widehat X_{\frac{N-1}{N}t}} \exp(-\langle \widetilde W_{\frac{t}{N}}f, \widehat X_{\frac{t}{N}}\rangle )\right) .
\end{array}
$$
We now apply property \eqref{4.3}  with  $s = \dfrac{t}{N}$, $\mu = \widehat X_{\frac{N -1}{N}}$, and $g = \tw_{\frac{t}{N}}f$, to  get
$$
\begin{array}{l}
\displaystyle
\mathbb{E}^\mu \exp (-\langle f, \widehat X_t\rangle - Y_t(\varphi)) = 
\lim_{N\rightarrow\infty}  \mathbb{E}^\mu \left( \exp (-r (N -1, t)) \exp(-\langle \tu_{\frac{t}{N}} \tw_{\frac{t}{N}} f, \widehat X_{\frac{N - 1}{N}t}\rangle )\right) 
\\[10pt]
\displaystyle \qquad 
=\lim_{N\rightarrow\infty}  \mathbb{E}^\mu \left( \exp (-r(N-2, t)) \exp(- \langle \dfrac{t}{N} \varphi + \tu_{\frac{T}{N}}\tw_{\frac{t}{N}} f, \widehat X_{\frac{N-1}{N}t}\rangle ) \right) 
\\[10pt]
\displaystyle \qquad =
\lim_{N\rightarrow\infty}  \mathbb{E}^\mu \left(\exp (-r (N - 2, t)) \cdot \mathbb{E}^\mu [\exp(-\langle \tw_{\frac{t}{N}}\tu_{\frac{t}{N}}\tw_{\frac{t}{N}} f, \widehat X_{\frac{N-1}{N}t}\rangle ) \mid \mathcal{F}_{\frac{N-2}{N}t}]\right).
\end{array}
$$
Using again the Markov property and \eqref{4.2} as before we obtain
$$
\begin{array}{l}
\displaystyle
\mathbb{E}^\mu\exp(-\langle f, \widehat X_t\rangle - Y_t(\varphi)) 
\\[10pt]
\displaystyle \qquad = 
\lim_{N\rightarrow\infty} \mathbb{E}^\mu\left( \exp(-r (N-2, t))\cdot \mathbb{E}^{\widehat X_{\frac{N-2}{N}t}} \exp (-\langle \tw_{\frac{t}{N}}C_{\frac{t}{N}}\tw_{\frac{t}{N}} f, \widehat X_{\frac{t}{N}}\rangle)\right)
\\[10pt]
\displaystyle \qquad  =
\lim_{N\rightarrow\infty} \mathbb{E}^\mu \left( \exp(-r(N-2, t)) \exp (-\langle (\tu_{\frac{t}{N}}\tw_{\frac{t}{N}})^2 f, \widehat X_{\frac{N - 2}{N}t}\rangle )\right),
\end{array}
$$
so, \eqref{a1} holds.

As in \cite{Iscoe86}, repeating the above procedure we have
$$
\mathbb{E}^\mu \exp(- \langle f, \widehat X_t\rangle - Y_t(\varphi)) = 
\lim_{N\rightarrow\infty} \exp(-\langle (\tu_{\frac{t}{N}}\tw_{\frac{t}{N}})^N f, \mu\rangle). 
$$  
We now use the following Trotter-Lee formula.
\begin{lemma}\label{lee}
	For any nonnegative $\varphi,f\in L^\infty(\rr^d)$ the following holds in $L^\infty(\rr^d)$:
	\begin{equation}
\label{4.5}
  \tv_t(\varphi,f)=\lim _{N\rightarrow\infty} (\tilde U_{t/N}\tilde W_{t/N})^N f,
 \quad \forall t>0.
\end{equation}
\end{lemma}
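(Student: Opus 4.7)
The plan is to recognize \eqref{4.5} as the classical Lie--Trotter splitting formula for the nonlinear semigroup $\tv_t(\varphi,\cdot)$ generated by the sum of two operators: the nonlinear operator $A(v) := \Delta v - |\nabla v|^2 + 1 - \sum_{k \geq 1} q_k e^{(1-k)v}$, which by \eqref{4.2} generates $\tu_t$, and the bounded additive operator $B = \varphi$, which generates $\tw_t$ since $\tw_t f = f + t\varphi$ solves $v_t = \varphi$ with $v(0)=f$. The right-hand side of \eqref{4.4} is precisely $A(v)+B$, so the target of the splitting is well identified.

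I would set $v_N(t) := (\tu_{t/N}\tw_{t/N})^N f$, establish uniform $L^\infty$ bounds $0\leq v_N(t)\leq \|f\|_\infty + t\|\varphi\|_\infty$ (using that $\tu_s$ preserves nonnegativity and is $L^\infty$-nonexpansive, both inherited from Theorem \ref{th1.1} via the formula $\tu_s g = -\ln U_s(e^{-g})$), and then carry out the local consistency analysis: for small $s$,
\[
\tu_s \tw_s g = g + s\bigl(A(g)+\varphi\bigr) + o(s),
\]
obtained by combining the PDE \eqref{4.2} satisfied by $\tu_s g$ with the explicit form of $\tw_s$. Telescoping the difference between $v_N(t)$ and $\tv_t(\varphi,f)$ via a discrete variation-of-constants formula, and exploiting the local-Lipschitz character of the nonlinearity $v \mapsto \sum_{k\geq 1} q_k e^{(1-k)v}$ on bounded subsets of $[0,\infty)$, one shows that the accumulated error is $O(t/N)$ and hence vanishes. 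Uniqueness of solutions to \eqref{4.4} in $L^\infty_+$ (via Gronwall, since the nonlinearity is globally Lipschitz on the relevant bounded range) then identifies the limit as $\tv_t(\varphi,f)$.

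The main obstacle is the rigorous consistency step in a nonlinear $L^\infty$ setting: $\tu_s$ is a nonlinear semigroup, so Taylor-expanding $\tu_s g$ to first order requires regularity of $g$ (at least $g \in W^{2,\infty}$) that the iterates $v_N(t)$ do not obviously possess uniformly in $N$. The cleanest remedy is to work throughout in a mild/Duhamel formulation based on the heat semigroup $(K_t \ast \,\cdot\,)$, which supplies the needed smoothing at each intermediate step: one rewrites the single step $\tu_{t/N}\tw_{t/N}$ as a Duhamel equation driven by the heat kernel, sums over the $N$ steps to obtain a discrete integral equation, and passes to the limit using dominated convergence together with the uniform bounds above. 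The limit equation coincides with the mild formulation of \eqref{4.4}, which concludes the proof.
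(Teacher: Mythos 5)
Your overall skeleton (stability of the limit flow, first--order consistency of one splitting step, telescoping, identification of the limit) is the right shape of argument, but there is a concrete gap at the point you yourself flag, and the remedy you propose does not close it. You work directly with the log--variables, so the equation whose Trotter formula you must prove is \eqref{4.4}, which contains the quadratic gradient term $-|\nabla v|^2$. A mild/Duhamel formulation driven by the heat kernel then produces the term $\int_0^t K_{t-s}\ast|\nabla v_N(s)|^2\,ds$, and passing to the limit in it requires $L^\infty$ bounds on $\nabla v_N$ that are uniform in $N$ --- precisely the regularity of the iterates that you admit you do not have. The only a priori bounds you establish are $L^\infty$ bounds on $v_N$ itself, so ``dominated convergence together with the uniform bounds above'' does not apply; likewise the nonlinearity of \eqref{4.4} is not ``globally Lipschitz on the relevant bounded range'' as a map on $L^\infty$, because of the gradient term. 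The same obstruction blocks your consistency expansion $\tu_s\tw_s g=g+s(A(g)+\varphi)+o(s)$: it needs $g\in W^{2,\infty}$, and propagating $W^{1,\infty}$ and $W^{2,\infty}$ bounds through the nonlinear semigroup $\tu_s$ is the actual hard content of the lemma, not a technicality.

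The paper's proof avoids all of this by first undoing the logarithm: it sets $V_t(\varphi,e^{-f})=\exp(-\tv_t(\varphi,f))$, $U_t(e^{-f})=\exp(-\tu_t f)$, $W_t(g)=ge^{-t\varphi}$, so that the splitting formula is equivalent (after a lower-bound lemma guaranteeing that all quantities stay bounded away from $0$, which makes $\log$ Lipschitz on the relevant range) to the product formula for the \emph{semilinear} equation $v_t=\Delta v-v+\sum_k q_kv^k-v\varphi$, whose nonlinearity is of order zero. In these variables the paper verifies the hypotheses of the Chorin--Hughes--McCracken--Marsden product formula: $L^\infty$-stability of $V_t$, local stability of the discrete flow $(U_{t/n}W_{t/n})^n$ in $W^{1,\infty}$ and $W^{2,\infty}$ (this is where the standing hypothesis $\sum_k k^2q_k<\infty$ is used, via the term $\sum_k k(k-1)q_ku^{k-2}p_{x_i}p_{x_k}$ in the equation for second derivatives), consistency for $W^{2,\infty}$ data, and finally a density argument to reach general $L^\infty$ data. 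If you want to salvage your direct approach you would have to reproduce equivalent uniform gradient and Hessian estimates for the iterates of the log--flows, which is strictly harder than doing the exponential substitution first; so the missing idea in your proposal is exactly this Cole--Hopf type reduction.
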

%
%
%

This gives us that 
\[
  \lim_{N\rightarrow\infty} \exp(-\langle (\tu_{\frac{t}{N}}\tw_{\frac{t}{N}})^N f, \mu\rangle). = \exp(-\langle 
\tv_t (\varphi, f), \mu \rangle).
\]
The proof of  Theorem \ref{th4.1} is  now finished.

We now prove Lemma \ref{lee}.
Let recall that $\tw_t f : = f + t \varphi$, where $\varphi \in b \mathcal{B}(\mathbb{R}^d)$, $t \geqslant  0$. Clearly $\tw_t f$ is the solution of the ordinary differential equation
$$
\left \{
\begin{array}{l}
\dfrac{d\tw}{dt} = \varphi, \\[10pt] W(0) = f. 
\end{array}
\right .
$$
Also $\tv_t(\varphi, f)$, $t \geqslant  0$ is the solution of  equation \eqref{4.4} and 
and $\tu_t f$ solution of \eqref{4.2}. 
Observe that if $\varphi \equiv 0$ then $\tv _t(0,f)=\tu _t(f)$.


Let us observe that $U_t(e^{-f})=\exp({-\tu _t(f)})$. Denoting by $V_t(\varphi,g)$ the solution of 
 \begin{equation}
\label{eq.v}
\left\{
\begin{array}{l}
\displaystyle  v _t= \Delta v  - v+ \sum_{k \geqslant  1} q_k v^k  -v \varphi, \qquad c\in \rr^d, t>0, \\[3mm]
v(\cdot, 0) = g,
\end{array}
\right.
  \end{equation} 
we have $V_t(\varphi,e^{-f})=\exp(-\tv_t(\varphi,f))$. 
We set $W_t(g)=ge^{-t\varphi}$. Thus $W_t(e^{-f})=\exp(-\tw _t(f))$.


The proof of Lemma \ref{lee} is done in two steps. In the first one we prove that is sufficient to show that 
 for any nonnegative $\varphi, \in L^\infty(\rr^d)$, we have 
\begin{equation}
\label{est.for.v}
  \|V_t(\varphi, e^{-f}) - (U_{t/n}W_{t/n})^n (e^{-f})\|_{L^\infty(\rr^d)}\rightarrow 0, \, n\rightarrow \infty.
\end{equation}
The second step consists in proving \eqref{est.for.v} by checking the conditions in \cite[Section 2]{chorin}. 
We start with the following estimate for the solutions of equation \eqref{eq.v}.

\begin{lemma}  
	\label{lemma.est.v}
	For any nonnegative $\varphi, f\in L^\infty(\rr^d)$, the solution of \eqref{eq.v} satisfies
\[
  e^{-t(1+\|\varphi\|_{L^\infty(\rr^d)})} e^{-\|f\|_{L^\infty(\rr^d)}}\leqslant  V_t(\varphi,e^{-f})\leqslant  1.
\]
\end{lemma}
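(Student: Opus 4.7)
\textbf{Proof proposal for Lemma \ref{lemma.est.v}.} The plan is to produce explicit space-independent super- and subsolutions for equation \eqref{eq.v} matching the claimed bounds, and then to invoke a comparison principle analogous to Theorem \ref{sub-super} adapted to the extra linear term $-v\varphi$. Throughout, set $M:=\|\varphi\|_{L^\infty(\rr^d)}$ and $F:=\|f\|_{L^\infty(\rr^d)}$, so that the initial datum satisfies $e^{-F}\leqslant e^{-f(x)}\leqslant 1$.

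For the upper bound, I would test the constant function $\supu\equiv 1$. Plugging into the right-hand side of \eqref{eq.v} gives $\Delta \supu - \supu +\sum_{k\geqslant 1}q_k \supu^k - \supu\varphi = -1 + 1 -\varphi = -\varphi \leqslant 0 = \supu_t$, so $\supu\equiv 1$ is a supersolution of \eqref{eq.v} dominating the initial datum $e^{-f}\leqslant 1$. Comparison then yields $V_t(\varphi,e^{-f})\leqslant 1$.

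For the lower bound, I would take the purely time-dependent candidate
\[
\subu(t):= e^{-(1+M)t}\,e^{-F},
\]
which satisfies $\subu(0)=e^{-F}\leqslant e^{-f}$. Since $\Delta \subu=0$, the subsolution inequality reduces, after dividing by the positive quantity $\subu(t)$, to
\[
-(1+M)\ \leqslant\ -1+\sum_{k\geqslant 1}q_k\,\subu(t)^{k-1}-\varphi(x),
\]
i.e.\ $\varphi(x)-M\leqslant \sum_{k\geqslant 1}q_k\,\subu(t)^{k-1}$. The left-hand side is $\leqslant 0$ (because $0\leqslant \varphi\leqslant M$) while the right-hand side is non-negative, so the inequality holds pointwise. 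Thus $\subu$ is a subsolution of \eqref{eq.v}, and comparison gives $V_t(\varphi,e^{-f})\geqslant e^{-(1+M)t}e^{-F}$, as required.

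To close the argument I would record the comparison principle for \eqref{eq.v}. It is an immediate adaptation of Theorem \ref{sub-super}: define $\tilde f(v,x):=-v+\sum_{k\geqslant 1}q_kv^k-v\varphi(x)$ and run the same computation on $\tfrac{d}{dt}\int e^{-\alpha t}(\subu-\supu)^+\,dx$. Since both $\subu$ and $\supu$ take values in $[0,1]$ and $\varphi$ is bounded, one can choose $\alpha>0$ (depending only on $q$ and $M$) large enough that $v\mapsto \tilde f(v,x)-\alpha v$ is decreasing on $[0,1]$ uniformly in $x$; the boundary term from the Laplacian has the right sign, and one concludes $\subu(t)\leqslant \supu(t)$. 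The only mildly delicate point is this adaptation, but nothing beyond the proof of Theorem \ref{sub-super} is needed, so I do not expect a genuine obstacle.
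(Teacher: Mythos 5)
Your proposal is correct in substance and arrives at the bounds with exactly the barriers the paper has in mind. The paper's own argument is a one-liner that sandwiches the nonlinear operator between two linear ones: since $0\leqslant v\leqslant 1$ one has $\Delta v - (1+\|\varphi\|_{L^\infty(\rr^d)})v \leqslant v_t \leqslant \Delta v$, and then the \emph{linear} maximum principle compares $v$ with $e^{-(1+M)t}e^{-F}$ from below and with $1$ from above. You instead exhibit these same two functions as sub/supersolutions of the full nonlinear equation and invoke a nonlinear comparison principle. The two routes are essentially equivalent, but the paper's has a small technical advantage you should be aware of: Theorem \ref{sub-super} as stated and proved integrates $(\subu-\supu)^+$ over $\rr^d$ and therefore requires $L^1$ data, whereas your barriers (and the solution $V_t(\varphi,e^{-f})$ for merely bounded $f$) are constants in space and not integrable. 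So the "immediate adaptation" you defer to is not quite Theorem \ref{sub-super}; you would need either a localized version of that computation or the standard comparison principle for bounded solutions of semilinear parabolic equations with a nonlinearity that is Lipschitz on $[0,1]$ (which holds here since $\sum_{k\geqslant 1}kq_k<\infty$). Reducing to linear differential inequalities, as the paper does, sidesteps this entirely. This is a fixable technicality, not a gap in the idea.
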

\begin{proof}
	We use the maximum principle and the fact $v$ solution of  \eqref{est.for.v}  
satisfies 	$$\Delta v-v(1+\|\varphi\|_{L^\infty(\rr^d)})\leq v_t\leq   \Delta v.$$
 \end{proof}

With this result we can now prove  that \eqref{est.for.v} implies \eqref{4.5}. 
 Explicit computations shows  the following identity
\[
  (U_{t/n}W_{t/n})^n (e^{-f}) =\exp( - (\tu_{t/n}\tw_{t/n})^n(f)).
\]
Using that for any $a,b>0$ we have $|\log a-\log b|\leqslant  |a-b|/ (\min \{a,b\})$ we get
\begin{equation}
\label{equivalenta}
  \|\tv_t (\varphi,f)-(\tu_{t/n}\tw_{t/n})^n(f)\|_{L^\infty(\rr^d)}\leqslant  \frac{\|V_t(\varphi,e^{-f}) - (U_{t/n}W_{t/n})^n (e^{-f})\|_{L^\infty(\rr^d)}} {\min_{x\in \rr^d} \{\min \{  V_t(e^{-f}),  (U_{t/n}W_{t/n})^n (e^{-f})  \}  \}}. 
\end{equation}
Observe that for any nonnegative function $g$ we have 
\[
  W_t(g)\geqslant  e^{-t\|\varphi\|_{L^\infty(\rr^d)}} \min_{ \rr^d} \{g\}.
\]
Using Lemma \ref{lemma.est.v} with $\varphi \equiv 0$ we also obtain 
\[
  U_t(g)\geqslant  e^{-t}\min_{ \rr^d} \{g\}.
\]
The last two estimates and Lemma \ref{lemma.est.v} show that 
\[
  (U_{t/n}W_{t/n})^n (e^{-f}) \geq e^{-t(1+\|\varphi\|_{L^\infty(\rr^d)})} e^{-\|f\|_{L^\infty(\rr^d)}}.
\]
Then the denominator of the right hand side of \eqref{equivalenta} is bounded from below by some positive constant and it is sufficient to prove \eqref{est.for.v}.

We now prove that \eqref{est.for.v} holds. 
\begin{lemma} 
	Let us assume that $g\in L^\infty(\rr^d)$ with $0\leqslant  g\leqslant  1$ and the sequence $ (q_k)_{k\geqslant  1}$ satisfies
	$\sum _{k\geqslant 1} k^2q_k<\infty$. For any $t>0$ the following holds
	\begin{equation}
\label{convergence}
  \|V_t(\varphi,g) - (U_{t/n}W_{t/n})^n (g)\|_{L^\infty(\rr^d)}\rightarrow 0, \quad n\rightarrow \infty.
\end{equation}
\end{lemma}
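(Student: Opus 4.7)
The plan is to verify the two classical ingredients of the nonlinear Trotter-Lee product formula framework of \cite{chorin}: a uniform stability/range estimate for the product iterates, and a local consistency estimate of order $o(h)$ comparing one small step of $U_h W_h$ with one step of the combined semigroup $V_h$.

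First I would establish stability. By Lemma \ref{lemma.est.v} applied with $\varphi\equiv 0$ the nonlinear semigroup $U_h$ maps $[0,1]$-valued data to $[0,1]$-valued data while preserving a strictly positive pointwise lower bound of the form $e^{-h}\min g$. The multiplicative step $W_h g = g e^{-h\varphi}$ satisfies an analogous lower bound $e^{-h\|\varphi\|_\infty}\min g$. Composing these $n$ times, the whole finite sequence of iterates $(U_{t/n}W_{t/n})^k g$, $0\leqslant k\leqslant n$, stays trapped in a fixed compact interval $[\delta,1]$ with $\delta=\delta(t,\|\varphi\|_\infty,\inf g)>0$ independent of $n$. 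On this region the nonlinearity $F(s) = -s + \sum_{k\geqslant 1} q_k s^k$ and its derivative $F'(s) = -1 + \sum_{k\geqslant 1} k q_k s^{k-1}$ are uniformly bounded thanks to hypothesis \eqref{hip.2.q}, and even $F''$ is controlled by the moment assumption $\sum k^2 q_k<\infty$.

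Next I would prove Lipschitz continuity of both $V_h$ and $U_h$ in $L^\infty(\rr^d)$ with constant $1+Ch$ for some $C$ depending only on $q$, $\|\varphi\|_\infty$ and $\delta$. For $V_h$ this follows from a Duhamel representation and Gronwall's inequality applied to \eqref{eq.v}, using the uniform Lipschitz bound on $F$ on $[\delta,1]$ and the $L^\infty$-contractivity of the heat semigroup; the factor $W_h$ is trivially a contraction.

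The crux is the consistency estimate
\[
\|V_h g - U_h W_h g\|_{L^\infty(\rr^d)} \leqslant C\, h^2,
\]
uniform for $g$ in the stable range. I would prove this by Taylor-expanding both $V_h g$ and $U_h W_h g$ around $h=0$ via their mild formulations: both expansions agree at zeroth order with $g$, and at first order both give $g + h[\Delta g - g + \sum q_k g^k - g\varphi]$; the second-order remainders are controlled using the $C^2$-bound on $F$ on $[\delta,1]$ and the local-in-time smoothing of the heat semigroup, which for rough $g$ justifies the computation after an arbitrarily small regularization (cf.\ the remark following Theorem~\ref{th1.1}). Telescoping then gives
\[
V_t g - (U_{t/n}W_{t/n})^n g = \sum_{k=1}^{n}(U_{t/n}W_{t/n})^{n-k}\bigl[V_{t/n}-U_{t/n}W_{t/n}\bigr]V_{(k-1)t/n}g,
\]
and combining Lipschitz stability with $n$ consistency errors of size $O((t/n)^2)$ produces a total error of order $O(t^2/n) \to 0$, which is \eqref{convergence}.

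The main obstacle will be making the consistency estimate genuinely uniform in $g$ across the stable region, and in particular controlling the infinite sum $\sum q_k u^k$ and its two derivatives simultaneously; this is exactly where the second-moment assumption $\sum k^2 q_k<\infty$ is essential, whereas \eqref{hip.2.q} alone would only give a first-order comparison insufficient for the telescoping sum.
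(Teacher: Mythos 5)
Your overall strategy is the same as the paper's: reduce the product formula to a stability estimate for the flows plus a one-step consistency estimate in the framework of \cite[Section 2]{chorin}, and your $L^\infty$-Lipschitz bounds for $V_h$, $U_h$, $W_h$ and the observation that the iterates stay in $[0,1]$ match the paper's Step I. (One small point: your lower barrier $\delta=\delta(t,\|\varphi\|_\infty,\inf g)$ is vacuous when $\inf g=0$, which the lemma allows; it is also unnecessary, since $F$, $F'$, $F''$ are already uniformly bounded on all of $[0,1]$ under \eqref{hip.2.q} and $\sum_{k\geqslant 1}k^2q_k<\infty$.)

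The genuine gap is in the crux step, the claimed one-step bound $\|V_hg-U_hW_hg\|_{L^\infty(\rr^d)}\leqslant Ch^2$ ``uniform for $g$ in the stable range.'' A second-order Taylor expansion of $h\mapsto V_hg$ requires two time derivatives at $h=0$, i.e. control of $\Delta(\Delta g+F(g)-g\varphi)$, so the constant $C$ depends on roughly $\|g\|_{W^{4,\infty}}$ and $\|\varphi\|_{W^{2,\infty}}$ --- not on quantities controlled by the stable range. Your proposed remedy (``an arbitrarily small regularization'') does not close the argument as stated: after regularizing, the constant in $Ch^2$ blows up as the regularization parameter tends to $0$, and the telescoped sum $n\cdot C_\eta(t/n)^2$ no longer visibly vanishes without a diagonal choice of $\eta=\eta(n)$ that you would have to make explicit; moreover, in your telescoping identity the consistency error is applied to $V_{(k-1)t/n}g$, whose higher norms degenerate as $(k-1)t/n\to 0$. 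The paper avoids all of this by proving only \emph{first-order} consistency, $\epsilon^{-1}(U_\epsilon W_\epsilon g-g)\to\Delta g-g+\sum_{k\geqslant 1}q_kg^k-g\varphi$, for $g\in W^{2,\infty}(\rr^d)$, which is exactly what \cite{chorin} requires; the price is that one must then establish $\dot W^{1,\infty}$ and $\dot W^{2,\infty}$ stability of the discrete iterates $(U_{t/n}W_{t/n})^k g$ --- and this, via the bound on $F''(u)=\sum_{k\geqslant 1}k(k-1)q_ku^{k-2}$, is where the hypothesis $\sum_{k\geqslant 1}k^2q_k<\infty$ is actually used --- before extending to general $g\in L^\infty(\rr^d)$ by the $L^\infty$-stability of both flows. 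You should either supply this higher-order stability of the iterates and downgrade your consistency claim to $o(h)$ on $W^{2,\infty}$ data, or carry out the regularization/diagonalization explicitly; as written, the $O(h^2)$ step does not hold at the stated generality.
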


\begin{proof}
	We first prove the result for $g\in W^{2,\infty}(\rr^d)$ with $0\leqslant  g\leqslant  1$ and then by $L^\infty(\rr^d)$-stability of the flows $V_t$ and  $(U_{t/n}W_{t/n})^n$ we obtain the desired result. 
	
	\textbf{Step I. Stability of the flow  	$V_t$.} We prove that for any $g_1,g_2\in L^\infty(\rr^d)$ with $0\leqslant  g_1,g_2\leqslant  1$ we have
	\begin{equation}
\label{stab.v}
  \|V_t(\varphi,g_1)-V_t(\varphi,g_2)\|_{L^\infty(\rr^d)} \leqslant  C(\|\varphi\|_{L^\infty(\rr^d)}, \sum_{k\geqslant 1} kq_k,T)\|g_1-g_2\|_{L^\infty(\rr^d)},\quad \forall t\in [0,T].
\end{equation}
Denoting by $v_1$ and $v_2$ the corresponding solutions we have $0\leqslant  v_1,v_2\leqslant  1$. Thus 
 $p=v_1-v_2$  satisfies
	\[
   p_t=\Delta p -p +p \sum_{k\geqslant 1}q_k(v_1^{k-1} + \dots+v_2^{k-1})-p\varphi.
\]
Using the sub-super solutions methods we find that 
$$\|p(t)\|_{L^\infty(\rr^d)}\leqslant  e^{tC(\|\varphi\|_{L^\infty(\rr^d)}, q)} \|g_1-g_2\|_{L^\infty(\rr^d)}.$$

\textbf{Step I. Local stability of the flow $(U_{t/n}W_{t/n})^n$ in $W^{2,\infty}(\rr^d)$. }
Let us consider $g\in W^{2,\infty}(\rr^d)$. 	
	The case of $L^\infty(\rr^d)$-norm easily follows since 
	\[
  \|W_t(g)\|_{L^\infty(\rr^d)}\leqslant  \|g\|_{L^\infty(\rr^d)},\quad 
  \|U_t(g)\|_{L^\infty(\rr^d)}\leqslant  \|g\|_{L^\infty(\rr^d)}.
\]

Let us now analyze the first derivative. Let us consider $u$ solution of \eqref{eq.v} with $\varphi=0$ and $p=u_{x_k}$. It follows that $p$ satisfies the equation
$$p_t=\Delta p-p-\sum _{k\geqslant  1}kq_k u^{k-1}p.$$
Choosing $\overline{p}=Me^{\alpha t}$ with $\alpha\geqslant  q-1$ and $M=\|g_{x_k}\|_{L^\infty(\rr^d)}$ we obtain that 
\[
  \|(U_t(g))_{x_k}\|_{L^\infty(\rr^d)}\leqslant  e^{\alpha t}\|g_{x_k}\|_{L^\infty(\rr^d)}.
\]
 Using that $W_t$ satisfies
 \[
  \|(W_t(g))_{x_k}\|_{L^\infty(\rr^d)}\leqslant  \|g_{x_k}\|_{L^\infty(\rr^d)} +t\|\varphi_{x_k}\|_{L^\infty(\rr^d)}\|g\|_{L^\infty(\rr^d)}
  \]
it follows that
\[
 \| (U_{t/n}W_{t/n})(g)\|_{\dot W^{1,\infty}(\rr^d)}\leqslant  e^{\alpha t /n}(\|g\|_{\dot W^{1,\infty}(\rr^d)} +\frac tn \|\varphi\|_{\dot W^{1,\infty}(\rr^d)}\|g\|_{L^\infty(\rr^d)}).
\]
Iterating the above argument we obtain that
\[
   \| (U_{t/n}W_{t/n})^n(g)\|_{\dot W^{1,\infty}(\rr^d)}\leqslant  e^{\alpha t }(\|g\|_{\dot W^{1,\infty}(\rr^d)} + t\|\varphi\|_{\dot W^{1,\infty}(\rr^d)} \|g\|_{L^\infty(\rr^d)}).
\]

Let us now denote $r=p_{x_i}$. Then $r$ satisfies
\[
  r_t=\Delta r- r +\sum _{k\geqslant  1} kq_k u^{k-1} r+\sum _{k\geqslant  1} k(k-1)q_ku^{k-2}p_{x_i}p_{x_k}.
\]
Choosing $\overline r=Me^{\beta t}$ and taking into account that 
\[|p_{x_i}p_{x_k}|\leqslant  e^{2\alpha t} 
(\|g\|_{\dot W^{1,\infty}(\rr^d)} + t\|\varphi\|_{\dot W^{1,\infty}(\rr^d)} \|g\|_{L^\infty(\rr^d)})^2=e^{2\alpha t C}
\]
 we can choose $$\beta(g) =q-1+(\sum_{k\geqslant  1} k^2q_k)(\|g\|_{\dot W^{1,\infty}(\rr^d)} + t\|\varphi\|_{\dot W^{1,\infty}(\rr^d)} )^2$$
 and $$M=\|q_{x_kx_i}\|_{L^\infty(\rr^d)}$$ to obtain that
 \[
  \|U_t(g)\|_{\dot W^{2,\infty}(\rr^d)}\leqslant  \|g\|_{\dot W^{2,\infty}(\rr^d)} \exp(t\beta(g)).
\]
Using the stability in $\dot W^{1,\infty}(\rr^d)$ and iterating the above estimate on $U_t$ we also obtain the local-stability in $\dot W^{2,\infty}(\rr^d)$.
  
  \textbf{Consistency}. We now prove that for any $g\in W^{2,\infty}$, $0\leqslant  g\leqslant  1$ we have
  \[
  \lim _{\epsilon\rightarrow 0} \frac{U_\epsilon W_\epsilon g- g}\epsilon - \Big(\Delta g - g+\sum _{k\geqslant  1}q_ku^k -g\varphi\Big) =0.
\]
We use that  $g\in W^{2,\infty}$, $0\leqslant  g\leqslant  1$ implies that our solution satisfies $U\in C^1([0,\infty], L^\infty(\rr^d))\cap C([0,T],W^{2,\infty}(\rr^d))$.  The explicit form of $W_\eps(g)= ge^{-\eps \varphi}$ give us the desired result.
\end{proof}

\end{document}